\documentclass{amsart}
\usepackage{amssymb}
\usepackage{a4wide}
\usepackage[all]{xypic}
\usepackage{graphicx}

\newcommand{\Z}{{\mathbb Z}}
\newcommand{\N}{{\mathbb N}}
\newcommand{\Q}{{\mathbb Q}}
\newcommand{\R}{{\mathbb R}}
\newcommand{\C}{{\mathbb C}}

\newtheorem{theorem}{Theorem}[section]

\newtheorem{lemma}[theorem]{Lemma}
\newtheorem{definition}[theorem]{Definition}
\newtheorem{corollary}[theorem]{Corollary}
\newtheorem*{remark}{Remark}
\newtheorem*{thm}{Theorem}
\newtheorem*{thmA}{Theorem A}
\newtheorem*{thmB}{Theorem B}
\setlength{\parindent}{0cm}

\begin{document}

\title{The $R_\infty$ property for nilpotent quotients of surface groups.}
\author[K.\ Dekimpe]{Karel Dekimpe}
\author[D.\ Gon\c{c}alves]{Daciberg Gon\c{c}alves}
\address{KULeuven Kulak\\
E. Sabbelaan 53\\
8500 Kortrijk\\
Belgium}
\email{karel.dekimpe@kuleuven-kulak.be}
\thanks{The first author expresses his gratitude towards the department of mathematics of the University of S\~{a}o Paulo for its hospitality and  to FAPESP-Funda\c c\~ao de Amparo a Pesquisa do
Estado de S\~ao Paulo, Projeto Tem\'atico Topologia Alg\'ebrica, Geom\'etrica
e Differencial 2012/24454-8 for support during his visit to IME-USP}
\address{Departemento de Matem\'atica--IME--USP\\ 
Universidade de S\~{a}o Paulo\\ 
S\~{a}o Paulo\\
Brasil}
\email{dlgoncal@ime.usp.br}
\keywords{Reidemeister classes; $R_{\infty}$ property; nilpotent groups; surfaces; central series; Lie Algebra; eigenvalue.}
\subjclass[2010]{ Primary: 20E36; secondary: 20F16,  20F18, 20F40, 55M20.}
\maketitle

\begin{abstract}It is well known that when $G$ is the fundamental group of a closed surface of negative Euler characteristic, 
it has the $R_{\infty}$ property. In this work we  compute the least integer $c$,  {\it called the  $R_{\infty}$-nilpotency degree of   $G$}, 
  such that the group $G/ \gamma_{c+1}(G)$ has the  $R_{\infty}$ property, where $\gamma_r(G)$ is the $r$-th term of the lower central series of $G$.
  We show that   $c=4$ for $G$  the fundamental group of any  orientable closed surface $S_g$ of genus $g>1$. For the fundamental group of the non-orientable surface $N_g$ (the connected sum of $g$ projective planes)  this number is $2(g-1)$ (when $g>2$). A similar concept is introduced using the derived  series $G^{(r)}$ of a group $G$.  Namely {\it  the  $R_{\infty}$-solvability degree of  $G$}, which is
  the least integer $c$ 
  such that the group $G/G^{(c)}$ has the  $R_{\infty}$ property. We show that the fundamental group 
  of an orientable closed surface $S_g$  has $R_{\infty}$-solvability degree $2$. 
  \end{abstract}

\section{Introduction} 
Any endomorphism $\varphi$ of a group $G$ determines an equivalence relation on $G$ by setting $x\sim y \Leftrightarrow 
\exists z \in G:\; x = z y \varphi(z)^{-1}$. The equivalence classes of this relation are called Reidemeister classes or twisted conjugacy classes and their number is denoted by $R(\varphi)$. This relation has appeared in the form as defined above in 1936 in the work of K. Reidemeister
\cite{Re}. Independently, and in another context, namely the  study of  fixed points of homeomorphism of surfaces, in 1927 a similar relation, but not 
exactly the same, was introduced by J. Nielsen in \cite{N} and the equivalences classes were called isogredience classes.  A more modern 
reference which shows the connection of Reidemeister classes with fixed point theory can be found in the book of B. Jiang \cite{Ji}.  
Also observe that in case $\varphi$ is the identity automorphism, the Reidemeister classes  are exactly the 
conjugacy classes of the group $G$.  
A group $G$ is said to have the $R_{\infty}$ property if for every automorphism $\varphi: G \to G$  the number 
 $R(\varphi)$ is  infinite. A central problem is to decide which groups have 
 the $R_{\infty}$ property.  The study of this problem
  has  been a quite active  research topic in recent years. Several families of groups have been studied by many authors. 
  To see a recent list which contains  the families of groups which have been studied, with possibly a few recent exceptions,
  see the introduction and the references in \cite{FT}. Nevertheless we make explicit here the references for 
  families of groups which have been studied and are closely related with the present work.   For finitely generated abelian groups, it  is part of the folklore that 
  these groups do not have the $R_{\infty}$ property. For not finitely generated abelian groups the result is not true, see \cite{DG1} for 
  details and further results. For nonabelian groups     
we would like to mention the following result, from  \cite{LL} and \cite{Fe}: 
  
  \begin{thm}
  If $G$ is a 
  non-elementary Gromov hyperbolic group, then $G$ has the $R_{\infty}$ property.
  \end{thm}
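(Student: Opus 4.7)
The plan is to argue by contradiction: assume there exists $\varphi \in \Aut(G)$ with $R(\varphi) < \infty$ and derive a contradiction from the rich boundary dynamics of a non-elementary hyperbolic group. First, I would recall the standard toolkit: $G$ contains a free subgroup of rank two, has infinitely many conjugacy classes of hyperbolic (loxodromic) elements, and admits a compact metrizable Gromov boundary $\partial G$ with no isolated points. Since any automorphism $\varphi$ is a quasi-isometry of $G$ (with respect to any word metric), it extends continuously to a homeomorphism $\hat\varphi\colon\partial G\to \partial G$, and a hyperbolic element $h$ with attracting/repelling fixed points $h^\pm \in \partial G$ is sent by $\varphi$ to a hyperbolic element with fixed points $\hat\varphi(h^\pm)$.

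Next I would produce an infinite family of hyperbolic elements $\{h_i\} \subset G$ whose pairs of fixed points $(h_i^-,h_i^+)$ sit in general position inside $\partial G\times \partial G \setminus \Delta$. Using a ping-pong construction inside a free subgroup of rank two, one obtains such families whose attracting fixed points are pairwise distinct and avoid any prescribed countable subset, exploiting density of loxodromic fixed-point pairs in $(\partial G\times \partial G)\setminus \Delta$. The core claim is then: if $h_i \sim_\varphi h_j$, say $h_i = z h_j \varphi(z)^{-1}$, then at the level of boundary dynamics $z\cdot \hat\varphi(h_j^\pm)=h_i^\pm$, so the fixed-point pair of $h_i$ lies in the $G$-orbit of $\hat\varphi$ applied to the fixed-point pair of $h_j$. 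Since $G$ is countable, each Reidemeister class contributes only countably many such fixed-point pairs, while we have the freedom to construct uncountably (or at least, ``too many'') families $\{h_i\}$, contradicting $R(\varphi)<\infty$.

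The hard part will be the rigidity step tying $\varphi$-twisted conjugacy of two hyperbolic elements to a precise relationship between their boundary fixed points; one needs that distinct loxodromic elements with different axes genuinely lie in different Reidemeister classes once the family is chosen carefully. Making this quantitative in a general Gromov hyperbolic group (without the surface or free group geometric structure, where one would appeal to Thurston or Bestvina--Handel train tracks) forces one to use quasi-geodesic axes, the North--South dynamics of loxodromic elements on $\partial G$, and translation length estimates. Once this rigidity is in hand, counting the orbits of $\hat\varphi$-images of boundary pairs against the abundance of independent hyperbolic elements constructed via ping-pong yields the contradiction. This boundary-dynamics argument, going back to Gromov, is the heart of the Levitt--Lustig and Fel'shtyn treatments, and is where the bulk of the technical effort must be spent.
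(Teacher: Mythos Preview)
The paper does not contain a proof of this theorem at all: it is quoted in the introduction as a known result from the literature, with attribution to Levitt--Lustig \cite{LL} and Fel'shtyn \cite{Fe}. So there is no ``paper's own proof'' to compare your proposal against.

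That said, your outline has a genuine gap at the key step. From $h_i = z\,h_j\,\varphi(z)^{-1}$ you assert that on the boundary $z\cdot\hat\varphi(h_j^{\pm})=h_i^{\pm}$. This does not follow. The action of $h_i$ on $\partial G$ is by left translation, and $L_{h_i}=L_z\circ L_{h_j}\circ L_{\varphi(z)^{-1}}$; using $\varphi\circ L_z=L_{\varphi(z)}\circ\varphi$ one gets $L_{\varphi(z)^{-1}}=\hat\varphi\circ L_{z^{-1}}\circ\hat\varphi^{-1}$, so
\[
L_{h_i}=L_z\circ L_{h_j}\circ \hat\varphi\circ L_{z}^{-1}\circ\hat\varphi^{-1},
\]
which is \emph{not} a conjugation of $L_{h_j}$, and the fixed-point pair of $h_i$ is not obtained from that of $h_j$ by the formula you wrote. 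Consequently the counting argument you sketch (countably many boundary pairs per Reidemeister class versus an abundance of independent loxodromics) does not get off the ground.

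The arguments in the cited sources avoid this pitfall. Fel'shtyn's proof is essentially a growth argument: one compares the exponential growth of ordinary conjugacy classes in a non-elementary hyperbolic group with a bound on how many ordinary classes a single $\varphi$-twisted class can meet, forcing $R(\varphi)=\infty$. The Levitt--Lustig input concerns the dynamics of $\hat\varphi$ itself (and of the maps $x\mapsto g\,\hat\varphi(x)$) on $\partial G$, and one passes through isogredience classes or the extension $G\rtimes_\varphi\Z$ rather than trying to read twisted conjugacy directly off the fixed points of the $h_i$. If you want to salvage a boundary-dynamics proof, the right objects to study are the homeomorphisms $\partial G\to\partial G$, $\xi\mapsto g\cdot\hat\varphi(\xi)$, one for each $g\in G$, and their fixed points; twisted conjugacy then corresponds to conjugating these maps by elements of $G$.
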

  
  The family of the  non-elementary Gromov 
  hyperbolic group  includes the family of  free groups of finite rank greater than 1 and also the surface groups of a hyperbolic closed surface.
  For a free group of infinite rank it is shown in \cite{DG} that it  does not have the  $R_{\infty}$ property.  The family of the elementary hyperbolic groups has been studied in \cite{GW1}. Also the groups    $G_n$, the group of pure symmetric automorphisms of the free 
  group $F_n$ of rank $n\geq 2$, which have a presentation which resembles the ones for nilpotent groups, have the $R_{\infty}$ 
  property (see \cite{GK} and \cite{GS}).  
   
   It turns out that in fact some quotients 
  of $F_r$ ($1<r<\infty$) already have the $R_{\infty}$ property. More precisely the quotients of $F_r$ by the  
   $j$-$th$ term of 
  either  the lower central series or the derived central series eventually have the $R_{\infty}$ property as $j$ goes to infinity.
   To see more concrete results about  nilpotent groups,  free nilpotent groups and free solvable groups  
   with the $R_{\infty}$ property, see \cite {GW} and \cite{Ro}. Some  results from the above references  were extended in \cite{DG}. More 
   precisely from \cite[Theorems 2.5 and 3.1]{DG} we obtain:  
   
 \begin{thmA}
 Let $r, c$ be positive integers, with $r>1$. Let $G = N_{c,r}=F_r/\gamma_{c+1}(F_r)$.
Then G has property $R_{\infty}$ if and only if $ c \geq 2r$.
\end{thmA}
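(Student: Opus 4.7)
My plan is to reduce the problem to the action of $\varphi$ on the successive quotients $L_i:=\gamma_i(N_{c,r})/\gamma_{i+1}(N_{c,r})$ of the lower central series. Any automorphism $\varphi$ of the finitely generated torsion-free nilpotent group $N_{c,r}$ induces an automorphism $\varphi_i$ on each $L_i$ for $1\leq i\leq c$, and iterating the well-known short-exact-sequence estimate for Reidemeister numbers along the lower central series yields
\[
 R(\varphi)=\prod_{i=1}^{c}R(\varphi_i)=\prod_{i=1}^{c}|\det(I-\varphi_i)|,
\]
with the convention that the right-hand side is infinite as soon as one determinant vanishes. Thus $R(\varphi)=\infty$ if and only if $1$ is an eigenvalue of $\varphi_i$ on $L_i\otimes\C$ for some $i\leq c$. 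The graded ring $\bigoplus_i L_i$ is the free Lie ring of rank $r$, and the graded action of $\varphi$ on it is entirely determined by its abelianization $A:=\varphi_1\in\GL(r,\Z)$; conversely, every matrix in $\GL(r,\Z)$ is realized this way. If $\lambda_1,\ldots,\lambda_r$ denote the complex eigenvalues of $A$, Witt's necklace formula shows that the multiset of eigenvalues of $\varphi_i$ consists of the monomials $\lambda_1^{k_1}\cdots\lambda_r^{k_r}$ with $k_1+\cdots+k_r=i$, each occurring with multiplicity equal to the number of Lyndon words of length $i$ on $\{1,\ldots,r\}$ containing letter $j$ exactly $k_j$ times.

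The implication $c\geq 2r\Rightarrow R_\infty$ reduces to an elementary observation about $\det A=\pm 1$. If $\det A=1$ then $\lambda_1\cdots\lambda_r=1$, and this degree-$r$ monomial appears in $\varphi_r$ with Lyndon multiplicity $(r-1)!>0$. If $\det A=-1$ then $(\lambda_1\cdots\lambda_r)^2=1$, and the degree-$2r$ monomial $\lambda_1^2\cdots\lambda_r^2$ has Lyndon multiplicity $\tfrac{1}{2r}\bigl(\tfrac{(2r)!}{2^r}-r!\bigr)$, which a short check shows to be strictly positive for $r\geq 2$. In either case some $\varphi_i$ with $i\leq 2r\leq c$ has $1$ as an eigenvalue, so $R(\varphi)=\infty$ for every $\varphi$, and $N_{c,r}$ has $R_\infty$.

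For the reverse implication $c<2r\Rightarrow\neg R_\infty$, my plan is to exhibit one $A\in\GL(r,\Z)$ with $\det A=-1$ whose eigenvalues are multiplicatively \emph{generic}, meaning that the only multiplicative relations between $\lambda_1,\ldots,\lambda_r$ in $\overline{\Q}^{\times}$ are consequences of $\lambda_1\cdots\lambda_r=-1$. Concretely, take $A$ to be an integer matrix whose characteristic polynomial is irreducible over $\Q$, has constant term $(-1)^{r+1}$, and whose roots generate a multiplicative group of rank $r-1$ modulo torsion; such matrices exist by Dirichlet's unit theorem combined with a Galois-theoretic genericity argument. For such an $A$, a monomial $\lambda_1^{k_1}\cdots\lambda_r^{k_r}$ equals $1$ only when all $k_j$ coincide and their common value is even, forcing the degree $\sum k_j$ to be at least $2r$. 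Hence no eigenvalue of $\varphi_i$ equals $1$ for $i\leq c\leq 2r-1$, every factor $\det(I-\varphi_i)$ in the product above is nonzero, and $R(\varphi)<\infty$.

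The step I expect to be the main obstacle is the construction and verification of the generic matrix $A$: one must produce explicitly, for each $r\geq 2$, a matrix in $\GL(r,\Z)$ whose eigenvalues carry no multiplicative relations beyond $\lambda_1\cdots\lambda_r=-1$, and then rule out the possibility that the intricate Lyndon multiplicities combined with the specific arithmetic of $A$ conspire to re-introduce the eigenvalue $1$ in some low-degree $\varphi_i$. Matching the unit-theoretic input with the combinatorial count of Lyndon-word monomials is what pins down the sharp threshold $c=2r$.
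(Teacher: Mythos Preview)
The paper does not prove Theorem~A; it is quoted from \cite{DG} (Theorems~2.5 and~3.1 there), so there is no in-paper proof to compare against line by line. However, the machinery the paper records in Lemmas~\ref{eigenvaluecriterium} and~\ref{eigenvalues}, and the explicit constructions it carries out in Section~4 for the analogous non-orientable problem, are lifted directly from \cite{DG}, so one can infer the shape of the original argument.

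Your reduction to the eigenvalues of the induced maps $\varphi_i$ on $L_i$ is exactly Lemma~\ref{eigenvaluecriterium}, and your description of those eigenvalues as degree-$i$ monomials in the $\lambda_j$, with Lyndon-word multiplicities, is the content of Lemma~\ref{eigenvalues}. Your forward direction ($c\ge 2r\Rightarrow R_\infty$) is correct and is the same argument as in \cite{DG}: for $r\ge 2$ the monomial $\lambda_1\cdots\lambda_r$ (when $\det A=1$) or $\lambda_1^{2}\cdots\lambda_r^{2}$ (when $\det A=-1$) involves at least two distinct indices, hence corresponds to at least one Lyndon word, and equals~$1$.

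The backward direction is where your plan and the argument of \cite{DG} diverge. You propose to produce a matrix $A\in\GL(r,\Z)$ with $\det A=-1$ whose eigenvalues are ``multiplicatively generic'' by appealing to Dirichlet's unit theorem and an unspecified Galois genericity argument. As you yourself flag, this is the crux and it is not actually carried out: you would still need, for each $r$, an explicit number field with full unit rank and a unit whose conjugates satisfy no relation beyond the norm, together with a verification. The route in \cite{DG}, mirrored in this paper in Lemma~\ref{matrices}, Theorem~\ref{existmap} and Corollary~\ref{notR}, is different and more concrete: one writes down an explicit companion-type integer matrix of determinant $-1$ with a single real eigenvalue of modulus $>1$ (simple) and all other eigenvalues of modulus $<1$, and then shows by an elementary argument that no $i$-fold product of the eigenvalues with $i<2r$ can equal~$1$. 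That approach trades your algebraic-number-theoretic black box for a hands-on polynomial estimate (the inequality $|a_{r-1}|>|a_{r-2}|+\cdots+|a_0|+2$ on the coefficients, cf.\ \cite[Lemma~2.4]{DG}), and it is what actually pins down the threshold $c=2r$ in \cite{DG}. Your outline is not wrong, but as written it leaves exactly the step that carries all the content unproved.
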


\begin{thmB}
Let $r > 1$ be an integer. The group $S_{r,1} = F_r/[F_r, F_r] = F_r/F_r^{(1)}=
 \Z^r$ does not have the $R_{\infty}$ property. On the other hand the groups $S_{r,k} = F_r/F_r^{(k)}$ have 
 property $R_{\infty}$ for all $k \geq 2$.
\end{thmB}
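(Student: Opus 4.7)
My plan is to separate the two assertions and, for the solvable half, to reduce all $k\geq 2$ to the metabelian case $k=2$. The first assertion is elementary: for any finitely generated abelian group and any endomorphism $\varphi$, the Reidemeister classes are the cosets of $(\varphi-\mathrm{id})(G)$, so for $G=\Z^r$ and $\varphi$ represented by $M\in\GL_r(\Z)$ one has $R(\varphi)=[\Z^r:(M-I)\Z^r]$, which is finite whenever $\det(M-I)\neq 0$; choosing $\varphi=-\mathrm{id}$ yields $R(\varphi)=2^r$, so $\Z^r$ does not have $R_\infty$.

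For the second assertion I would first reduce to $k=2$: the subgroup $F_r^{(2)}/F_r^{(k)}$ is characteristic in $S_{r,k}$ because the derived series is invariant under every automorphism, so every $\varphi\in\Aut(S_{r,k})$ descends to an automorphism $\bar\varphi$ of $S_{r,2}$, and since any surjection of groups induces a surjection of twisted conjugacy class sets, $R(\varphi)\geq R(\bar\varphi)$. Hence it suffices to prove the free metabelian group $G:=S_{r,2}$ has $R_\infty$. For this I would exploit the short exact sequence
\[
1\longrightarrow A\longrightarrow G\longrightarrow\Z^r\longrightarrow 1,
\]
where $A=[F_r,F_r]/F_r^{(2)}$ carries its natural $\Z[\Z^r]$-module structure from conjugation and, by the Magnus embedding, is a faithful and nontrivial $\Z[\Z^r]$-module. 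Any $\varphi\in\Aut(G)$ induces $\psi\in\GL_r(\Z)$ on the abelianization and a $\psi$-semilinear automorphism $\alpha$ of $A$, namely $\alpha(x\cdot a)=\psi(x)\cdot\alpha(a)$ for $x\in\Z^r$, $a\in A$. If $\psi$ has eigenvalue $1$, then by the first part $R(\psi)=\infty$ on $\Z^r$ and the inequality $R(\varphi)\geq R(\psi)$ closes the case.

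The remaining case $\det(I-\psi)\neq 0$ is where I expect the principal obstacle. A direct twisted-conjugation computation shows that under this hypothesis any $z\in G$ carrying one element of $A$ to another must project trivially to $\Z^r$ and hence lie in $A$; therefore two elements of $A$ are $\varphi$-equivalent in $G$ exactly when their difference lies in $(I-\alpha)A$, and the number of Reidemeister classes meeting $A$ equals $|A/(I-\alpha)A|$. It then remains to show this cokernel is infinite. My plan is to exploit the $\psi$-semilinearity together with the $\Z[\Z^r]$-module structure of $A$: extending scalars to $\C$ and decomposing by the dual $\psi$-action on the character torus of $\Z^r$, the hypothesis $\det(I-\psi)\neq 0$ forces only finitely many $\psi$-fixed characters, so infinitely many $\psi$-orbits remain; a case analysis of how $I-\alpha$ acts on the orbital components (separating finite from infinite orbits, and using an invariants/coinvariants comparison on the former) produces infinitely many independent obstructions in the cokernel, whence $R(\varphi)=\infty$.
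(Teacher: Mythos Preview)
The paper does not actually prove Theorem~B: it is quoted from \cite{DG} (Theorems~2.5 and~3.1 there). The method used in \cite{DG}, and the one this paper applies throughout (see Lemma~\ref{eigenvaluecriterium} and the proof of Corollary~\ref{degsolv}), is \emph{not} a direct module-theoretic attack on the infinite-rank abelian kernel. Instead one passes to a suitable finitely generated nilpotent quotient and uses the eigenvalue criterion: if $\varphi\in\Aut(S_{r,2})$ induces $\psi\in\GL_r(\Z)$ on the abelianisation with eigenvalues $\lambda_1,\dots,\lambda_r$, then in the free metabelian $2r$-step nilpotent Lie algebra the Chen basis element
\[
[[\cdots[[Y_r,Y_1],Y_1],Y_2],Y_2],\dots,Y_{r-1}],Y_{r-1}],Y_r]
\]
(each $Y_i$ occurring twice) lies in $L_{2r}$ and is an eigenvector for the eigenvalue $(\lambda_1\cdots\lambda_r)^2=(\det\psi)^2=1$. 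Hence $S_{r,2}/\gamma_{2r+1}(S_{r,2})$ has $R_\infty$, and your own reduction to $k=2$ then gives the full statement.

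Your first assertion and your reduction $k\geq 2\Rightarrow k=2$ are correct, and so is the computation showing that, when $\det(I-\psi)\neq 0$, distinct $\varphi$-classes meeting $A$ are in bijection with $A/(I-\alpha)A$. The genuine gap is the final paragraph. What you wrote there is not a proof but a heuristic: you have not specified the module $A$ (it is the relation module for $\Z^r$, not free over $\Z[\Z^r]$), you have not said what ``orbital components'' means for a $\psi$-\emph{semilinear} map on a non-free module, and the sentence ``a case analysis \dots\ produces infinitely many independent obstructions'' carries all the content without supplying any. Extending scalars to $\C$ and looking at the $\psi$-action on the character torus $(\C^\ast)^r$ does not by itself control the cokernel of a semilinear endomorphism of a non-free module; one would need, at minimum, an explicit description of $A\otimes\C$ at each character and a concrete computation on finite $\psi$-orbits, and you have provided neither. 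The nilpotent-quotient argument above replaces this entire step with a one-line eigenvalue check, and is what you should use.
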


\noindent Here $\gamma_s(G)$ denotes the $s$-th term of the lower central series of $G$ and $G^{(s)}$
the $s$-th  term of the derived series of $G$.

Motivated by the two above results from  \cite{DG} we will define two numbers associated to 
a group  which have  the $R_{\infty}$ property.

\begin{definition} The $R_{\infty}$-nilpotency 
degree of a group  $G$  which has the $R_{\infty}$ property is the least integer $c$ such that $G/\gamma_{c+1}(G)$
has the $R_{\infty}$  property. In case this integer does not exist then we say that $G$ 
has $R_{\infty}$-nilpotency degree infinite.
\end{definition}

According to this definition, it follows from Theorem A  that the   $R_{\infty}$-nilpotency degree of 
the free group $F_r$ of rank $r$  for $r>1$ equals $2r$.


 In a similar fashion we define the  $R_{\infty}$-solvablity  degree of a group  $G$   which has the $R_{\infty}$ property.
  
 \begin{definition} 
The $R_{\infty}$-solvability degree of a group  $G$  which has the $R_{\infty}$ property is the least integer $c$ such 
that $G/G^{(c)}$ has the $R_{\infty}$  property. In case this integer does not exist then we say that $G$ 
has $R_{\infty}$-solvability degree infinite.
\end{definition}

It  follows from Theorem B that the   $R_{\infty}$-solvability  degree of 
the free group $F_r$ of rank $r$  for $r>1$ equals  $2$.

\medskip
\begin{remark}
Let $G$ be a group which has $R_\infty$-nilpotency degree $c$. Then for any $k<c$ we have that 
$G/\gamma_{k+1}(G)$ does not have property $R_\infty$, while for $k\geq c$ the groups $G/\gamma_{k+1}(G)$ 
do have property $R_\infty$. So the $R_\infty$-nilpotency degree really determines the boundary of the nilpotency class 
at which one can detect the $R_\infty$ property of a given group $G$.\\
An analogous remark holds for the $R_\infty$-solvability degree.
\end{remark}

\medskip

 Given a group $G$ which has the $R_{\infty}$ property,  besides to be a natural question to study the $R_{\infty}$-nilpotency and 
 $R_{\infty}$-solvability degree  of the group $G$, it also provides a source of new examples of groups with the $R_{\infty}$ property. 
 The present work studies this question for the case where the group $G$ is the fundamental   group of a closed surface.
So we do not consider the case where the surface is either $S^2$, $\R P^2$,  $T={\rm Torus}$ since the $R_{\infty}$ property does not hold 
for the  fundamental group of these surfaces. The case where $S=K$, the Klein bottle, 
 is easy to solve. Recall that $G=\pi_1(K)=\Z\rtimes \Z$ and that $G$ has property $R_\infty$ (see Lemma 2.1 and Theorem 2.2 in \cite{GW}).
Then we have $\gamma_s(G)=2^{s-1}\Z$ and $G^{(s)}$ is $2\Z$ if $s=1$ and the trivial subgroup if $s>1$. It then follows that $G/G^{(1)}$ 
does not have the $R_{\infty}$ property, but $G=G/G^{(s)}$ does have property $R_\infty$ for $s\geq 2$, so the $R_{\infty}$-solvability degree of $G$ is 2. 
Also from \cite{GW1} it follows that none of  the quotients $G/\gamma_{s+1}(G)=\Z_{2^s}\rtimes \Z$ have the $R_{\infty}$ property.
So, the $R_\infty$-nilpotency degree of $G$ is infinite.

\medskip

So, for the rest of this paper we will focus on the case  where  $S$ is a closed surface of negative Euler characteristic. 

\medskip 

The main results of  the manuscript are:

\medskip

{\bf Theorem} {\bf \ref{orient}.} 
{\it Let $g\geq 2$ and $\pi_g$ be the fundamental group of the orientable surface of genus $g$. \\
For any $c\geq 4$, we have that the $c$-step nilpotent quotient 
$\pi_g / \gamma_{c+1} (\pi_g)$ has property $R_\infty$. On the other hand, for $c=1,2$ and $3$,
this quotient does not have property $R_\infty$.\\
I.e.\ the $R_\infty$-nilpotency degree of $\pi_g$ is 4.}

\medskip

{\bf Theorem} {\bf\ref{nonorient}.}
{\it Let $g\geq 2$ and let $\Gamma_{g+1}$ be the fundamental group of the non-orientable surface of genus $g+1$. 
Then, for any $c\geq 2g$, we have that the $c$-step nilpotent quotient 
$\Gamma_{g+1} / \gamma_{c+1} (\Gamma_{g+1})$ has property $R_\infty$. On the other hand, for $c<2g$,
 $\Gamma_{g+1} / \gamma_{c+1} (\Gamma_{g+1})$  does not have property $R_\infty$.\\
I.e.\ the $R_\infty$-nilpotency degree of $\Gamma_{g+1}$ is $2g$. }

 \medskip

Let $G=\pi_g/\pi_g^{(2)}$, i.e. the quotient of $\pi_g$ by the $2-$th term of the  derived series of $\pi_g$. 

\medskip

{\bf Corollary
\ref{degsolv}.} Let $g\geq 2$ and let  $G=\pi_g/\pi_g^{(2)}$.\\
The group $G/\gamma_{c+1}(G)$ has the $R_{\infty}$ property, if and only if $c\geq 4$. Further,
the $R_{\infty}$-solvability degree of $\pi_g$ is 2, i.e.  
$\pi_g/\pi_g^{(c)}$ has the $R_{\infty}$ property, if and only if $c\geq 2$.

\medskip

  This work is divided into 3 sections besides the introduction. In section 2 we review and present some results about nilpotent groups 
  which we are going to use. In section 3 we consider the case  of orientable surfaces. In section 4 we consider the non-orientable case.

\section{Some preliminaries  on the $R_\infty$ property for nilpotent groups}
Let $A$ be a finitely generated abelian group with torsion subgroup $T$ and free 
abelian quotient $A/T$. Let $\varphi$ be any endomorphism of $A$, then 
$\varphi$ induces an endomorphism $\bar{\varphi}$ of $A/T$.
It is easy to see that $R(\varphi)=\infty \Leftrightarrow R(\bar\varphi) =\infty$. 
As $A/T$ is free abelian, the morphism $\bar\varphi$ can be described as a matrix (with integer entries) and so 
it makes sense to talk about the eigenvalues of $\bar\varphi$. In general we will talk about the eigenvalues of 
$\varphi$ by which we will mean the eigenvalues of $\bar\varphi$ and by the determinant of $\varphi$ we will mean 
the determinant of $\bar\varphi$.

It follows that 
$R(\varphi)=\infty$ if and only if 1 is an eigenvalue of $\bar\varphi$. 

\medskip

More generally, we consider a finitely generated nilpotent group $N$ and an endomorphism 
$\varphi$ of $N$. This morphism induces a sequence of endomorphisms
\[ \varphi_i: \gamma_i(N)/\gamma_{i+1}(N) \rightarrow \gamma_i(N)/\gamma_{i+1}(N): x \gamma_{i+1}(N) 
\mapsto \varphi(x)  \gamma_{i+1}(N).\]
As all of these groups $\gamma_i(N)/\gamma_{i+1}(N)$ are finitely generated abelian groups, we can talk about the 
eigenvalues of each of the $\varphi_i$. 
When we talk about the eigenvalues of $\varphi$ we will mean the collection of all eigenvalues of all $\varphi_i$.

There is another way of determining these eigenvalues: Consider again the torsion subgroup 
$T$ of $N$ and the induced endomorphism $\bar\varphi$ on the torsion free nilpotent group 
$M=N/T$. This latter group $M$ has a radicable hull (or rational Malcev completion) $M^\Q$. 
The endomorphism $\bar\varphi$ uniquely extends to this radicable hull $M^\Q$. We will denote this extension 
also by $\bar\varphi$.
Corresponding to this radicable hull there is a rational Lie algebra ${\mathfrak m}$. Moreover any endomorphism 
$\psi$ of $M^\Q$ induces a Lie algebra endomorphism $\psi_\ast$ of  ${\mathfrak m}$. 
The eigenvalues of $\varphi$ are then exactly the eigenvalues of the linear map $\bar\varphi_\ast$. 
In fact, if we use $\bar\varphi_{\ast,i}$ to denote the induced map on $\gamma_i({\mathfrak m})/\gamma_{i+1}(
{\mathfrak m})$, then the eigenvalues of $\varphi_i$ are exactly those of $\bar\varphi_{\ast,i}$.
 
 We remark here that one can equally well work with the real Malcev completion and its real associated Lie algebra.
 \medskip

Using this terminology of eigenvalues we have (\cite[Lemma 2.1]{DG}):
\begin{lemma}\label{eigenvaluecriterium}
Let $N$ be a finitely generated nilpotent group and $\varphi$ an endomorphism of $N$, then 
\[ R(\varphi)=\infty \Leftrightarrow 1 \mbox{ is an eigenvalue of } \varphi.\] 
\end{lemma}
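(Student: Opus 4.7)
The plan is a reduction to the torsion-free case followed by an induction on the nilpotency class. For the reduction, let $T$ be the torsion subgroup of $N$, which is finite and characteristic since $N$ is finitely generated nilpotent, and set $M=N/T$. The quotient $N\twoheadrightarrow M$ induces a surjection of Reidemeister class sets $\mathcal{R}(\varphi)\twoheadrightarrow \mathcal{R}(\bar\varphi)$ whose fibres are bounded in size by $|T|$, so $R(\varphi)=\infty\Longleftrightarrow R(\bar\varphi)=\infty$. Since the eigenvalues of $\varphi$ are by definition those of $\bar\varphi$, I may assume from now on that $N$ is torsion-free and drop the bar, writing $\varphi$ for the endomorphism of $N$.

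I would then argue by induction on the nilpotency class $c$ of $N$. The base case $c=1$ is the finitely generated torsion-free abelian criterion, i.e.\ $R(\varphi)=|\mathrm{coker}(I-\varphi)|$, which is infinite iff $1$ is an eigenvalue of the integer matrix $\varphi$. For $c>1$, set $Z=\gamma_c(N)$, which is central, characteristic and free abelian; let $\varphi_Z=\varphi|_Z$ and let $\varphi_Q$ be the induced endomorphism of $N/Z$. The central exact sequence
\[
1\longrightarrow Z\longrightarrow N\longrightarrow N/Z\longrightarrow 1
\]
gives a surjection $\pi\colon \mathcal{R}(\varphi)\twoheadrightarrow \mathcal{R}(\varphi_Q)$. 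For the direction $(\Leftarrow)$, if $1$ is an eigenvalue of some $\bar\varphi_{\ast,i}$ with $i<c$, induction applied to $N/Z$ gives $R(\varphi_Q)=\infty$, so $\pi$ forces $R(\varphi)=\infty$. If $1$ is an eigenvalue only of $\bar\varphi_{\ast,c}$, i.e.\ of $\varphi_Z$ on $Z\otimes\Q$, I would use the centrality of $Z$ to see that $z\mapsto [z]$ factors through the quotient $Z\big/\bigl(\{g\varphi(g)^{-1}:g\in N\}\cap Z\bigr)$ and show that this quotient is infinite. For $(\Rightarrow)$ (contrapositive), if no $\bar\varphi_{\ast,i}$ has $1$ as an eigenvalue, induction gives $R(\varphi_Q)<\infty$, and each fibre of $\pi$ is a Reidemeister set for a twisted endomorphism of $Z$ with the same invertible linearisation $I-\bar\varphi_{\ast,c}$, hence finite, whence $R(\varphi)<\infty$.

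The main obstacle is the step just above in which $1$ appears only as an eigenvalue of the deepest factor $\bar\varphi_{\ast,c}$: one must ensure that conjugation by general $g\in N$ (not only $g\in Z$) does not collapse the infinitely many $\varphi_Z$-fixed vectors of $Z\otimes\Q$ into finitely many Reidemeister classes of the ambient group $N$. Technically this amounts to controlling $\{g\varphi(g)^{-1}:g\in N\}\cap Z$ modulo $(I-\varphi_Z)(Z)$ and torsion, and it is equivalent to establishing the Anosov-type formula $R(\varphi)=|\det(I-\varphi_\ast)|$ for torsion-free finitely generated nilpotent groups; the block-triangular decomposition of $\varphi_\ast$ along the graded pieces of the rational Lie algebra $\mathfrak{m}$ then makes the equivalence with the eigenvalue condition immediate.
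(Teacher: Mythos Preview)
The paper does not prove this lemma at all; it simply quotes it as \cite[Lemma~2.1]{DG}. So there is no ``paper's proof'' to compare with beyond the citation. Your inductive scheme (torsion reduction, then induction on the nilpotency class through the central subgroup $Z=\gamma_c(N)$) is exactly the standard route and your contrapositive direction $(\Rightarrow)$ is fine: each fibre of $\pi$ is a quotient of $Z$ by a subgroup containing $(I-\varphi_Z)(Z)$, which has finite index once $1$ is not an eigenvalue of $\varphi_Z$.

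Where you stop short is the case ``$1$ is an eigenvalue only of $\bar\varphi_{\ast,c}$''. You correctly isolate the set $B=\{g\varphi(g)^{-1}:g\in N\}\cap Z$ and note that one must show $|Z/B|=\infty$, but invoking the Anosov-type determinant formula here is circular: that formula is typically \emph{derived} from the present lemma (or an equivalent inductive count), not the other way round. The gap can be closed directly within your own framework. Since $1$ is not an eigenvalue of any $\bar\varphi_{\ast,i}$ with $i<c$, the induced endomorphism on the Malcev Lie algebra of $(N/Z)/\mathrm{tors}$ has no fixed vectors, so $\mathrm{Fix}(\varphi_Q)$ is contained in the (finite) torsion of $N/Z$. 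Now observe that the map
\[
\delta:\ P=\{g\in N:\bar g\in\mathrm{Fix}(\varphi_Q)\}\ \longrightarrow\ Z,\qquad \delta(g)=g\,\varphi(g)^{-1},
\]
is a homomorphism (by centrality of $Z$) with image $B$ and with $\delta(Z)=(I-\varphi_Z)(Z)$. Hence $B/(I-\varphi_Z)(Z)$ is a quotient of $P/Z\cong\mathrm{Fix}(\varphi_Q)$, which is finite. Therefore $B$ and $(I-\varphi_Z)(Z)$ have the same Hirsch rank, strictly smaller than $\operatorname{rank} Z$ because $1$ is an eigenvalue of $\varphi_Z$; consequently $|Z/B|=\infty$ and the fibre of $\pi$ over $[1]$ is infinite. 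With this paragraph inserted, your argument is complete and matches the proof given in \cite{DG}.
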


In the section on non orientable surface groups we will need the following result, which generalizes Lemma 2.2 of \cite{DG}

\begin{lemma}\label{eigenvalues}
 Let $G$ be a nilpotent group which contains a free nilpotent subgroup $N_{r,c}$ as a subgroup 
of finite index. Let $\varphi$ be an endomorphism of $G$ and denote by $\varphi_i$ the induced 
endomorphisms of $\gamma_i(G)/\gamma_{i+1}(G)$. Let 
\[ \lambda_1, \lambda_2, \ldots, \lambda_r \in \C\] 
denote the eigenvalues of $\varphi_1$ $($listed as many times as their multiplicity$)$. Then any eigenvalue $\mu$ of $\varphi_i$ 
$($with $2\leq i \leq c$$)$ can be written as an $i$--fold  product 
\[ \mu =\lambda_{j_1} \lambda_{j_2} \cdots \lambda_{j_i}.\]
Moreover, any such a product in which not all of the $j_i$ are equal does appear as an eigenvalue of $\varphi_i$.
\end{lemma}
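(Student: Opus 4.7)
The plan is to reduce the eigenvalue question to linear algebra on the graded Lie algebra associated with the rational Mal'cev completion. First I would replace $G$ by $G/T$, where $T$ is the (finite) torsion subgroup; this does not affect the eigenvalues of the $\varphi_i$, and $N_{r,c}$ remains a finite-index subgroup of $G/T$. Hence $G/T$ and $N_{r,c}$ share the same rational Mal'cev completion $M^\Q$, whose associated rational Lie algebra $\mathfrak{m}$ is the free $c$-step nilpotent Lie algebra $\mathfrak{f}_{r,c}$ on $r$ generators. This algebra is graded, $\mathfrak{f}_{r,c}=\bigoplus_{i=1}^{c}\mathfrak{f}_i$, and by the discussion preceding Lemma~\ref{eigenvaluecriterium} the eigenvalues of $\varphi_i$ coincide with those of the induced graded pieces $\bar\varphi_{\ast,i}$ of the Lie algebra endomorphism $\bar\varphi_\ast$.

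Next I would pass to $\C$ and choose a basis $e_1,\ldots,e_r$ of $\mathfrak{f}_1$ in which $\bar\varphi_{\ast,1}$ is upper triangular with diagonal entries $\lambda_1,\ldots,\lambda_r$. Since $\mathfrak{f}_{r,c}$ is generated in degree $1$, a Hall basis of $\mathfrak{f}_i$ consists of iterated brackets of the $e_j$. For any such bracket $b=[e_{j_1},e_{j_2},\ldots,e_{j_i}]$, the derivation property together with the triangularity of $\bar\varphi_{\ast,1}$ gives
\[ \bar\varphi_{\ast,i}(b)=\lambda_{j_1}\lambda_{j_2}\cdots\lambda_{j_i}\, b + (\text{brackets of strictly lower multi-index in the lex order}). \]
Ordering the Hall basis of $\mathfrak{f}_i$ compatibly with this lex order makes $\bar\varphi_{\ast,i}$ upper triangular, with diagonal entries precisely the products $\lambda_{j_1}\cdots\lambda_{j_i}$ indexed by the basis brackets. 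This immediately establishes the first assertion: every eigenvalue of $\varphi_i$ is an $i$-fold product of the $\lambda_j$.

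For the converse I would appeal to Witt's formula for the multi-homogeneous components of a free Lie algebra: the dimension of $\mathfrak{f}_i^{(\alpha)}$, for any composition $(\alpha_1,\ldots,\alpha_r)$ with $\sum_k\alpha_k=i\geq 2$ whose support has more than one index, is strictly positive. Consequently, any prescribed mixed multi-type is realised by at least one Hall bracket, which forces $\lambda_1^{\alpha_1}\cdots\lambda_r^{\alpha_r}$ onto the diagonal of the triangularised $\bar\varphi_{\ast,i}$, and hence as an eigenvalue of $\varphi_i$.

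The main technical obstacle is the bookkeeping in the second paragraph: one must verify that substituting the triangular expansion of $\bar\varphi_{\ast,1}(e_j)$ into an iterated bracket, and then repeatedly invoking the Jacobi and antisymmetry relations to rewrite the outcome in the Hall basis, really produces only terms of strictly lower lex multi-index. Once that triangular form is secured, the existence statement is just the combinatorial content of Witt's formula and no further work is required.
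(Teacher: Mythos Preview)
Your argument is correct and follows exactly the paper's route: kill torsion, use commensurability to identify the Mal'cev Lie algebra with the free $c$-step nilpotent Lie algebra on $r$ generators, then read off the eigenvalues on the graded pieces (the paper simply defers this last step to Lemma~2.2 of \cite{DG}). Your worry about Hall-basis rewriting can be sidestepped entirely: the free Lie algebra is honestly $\N^r$-graded by multi-degree in the $e_j$, so with $\bar\varphi_{\ast,1}$ upper triangular the induced map on $\mathfrak{f}_i=\bigoplus_{|\alpha|=i}\mathfrak{f}^\alpha$ is block-triangular and acts as the scalar $\lambda^\alpha=\prod_j\lambda_j^{\alpha_j}$ on each block $\mathfrak{f}^\alpha$, with no Jacobi bookkeeping needed; Witt's formula then tells you $\dim\mathfrak{f}^\alpha>0$ precisely when $\alpha$ is not concentrated on a single index.
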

\begin{proof}
By dividing out the torsion part of $G$, we may assume that $G$ is torsion free. Now, as $G$ and 
$N_{r,c}$ are commensurable, we have that $G^\Q=N_{r,c}^\Q$ and so the corresponding Lie algebra 
is the free nilpotent Lie algebra of class $c$ on $r$ generators.   The proof then continues along the 
same lines as that of Lemma 2.2 of \cite{DG}.
\end{proof}

\begin{remark}
Note that when $\varphi$ is an automorphism, then $\lambda_1\lambda_2\ldots\lambda_r=\pm 1$.
\end{remark}

Finally, we will also need the following Lemma, which is probably well know, but we present its short proof 
here for the convenience of the reader.
\begin{lemma}\label{hopfian}
Let $N$ be a finitely generated nilpotent group and $\varphi:N\rightarrow N$ be an endomorphism.
If the induced map $N\rightarrow N/[N,N]:n\mapsto \varphi(n)[N,N]$ is surjective, then $\varphi$ is an automorphism of $N$.
\end{lemma}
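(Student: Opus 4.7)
The plan is a two-step reduction: first show that $\varphi(N)=N$, and then upgrade surjectivity to bijectivity using the Hopfian property of finitely generated nilpotent groups.

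For surjectivity, I would prove by induction on $i\geq 1$ the refined statement
\[ \gamma_i(N) = \varphi(\gamma_i(N))\cdot \gamma_{i+1}(N). \]
The case $i=1$ is exactly the hypothesis. For the inductive step, recall that $\gamma_i(N)$ is generated by commutators $[a,b]$ with $a\in N$ and $b\in\gamma_{i-1}(N)$. Applying the case $i=1$ to $a$ and the case $i-1$ to $b$, write $a=\varphi(a')\,u$ with $a'\in N$ and $u\in\gamma_2(N)$, and $b=\varphi(b')\,v$ with $b'\in\gamma_{i-1}(N)$ and $v\in\gamma_i(N)$. Using the standard commutator expansion identities, together with the inclusions $[\gamma_2(N),\gamma_{i-1}(N)]\subseteq\gamma_{i+1}(N)$ and $[N,\gamma_i(N)]=\gamma_{i+1}(N)$, one obtains
\[ [a,b]\equiv\varphi([a',b'])\pmod{\gamma_{i+1}(N)}, \]
with $[a',b']\in\gamma_i(N)$, which yields the desired inductive step. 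Letting $c$ denote the nilpotency class of $N$ and chaining for $i=1,2,\ldots,c$ gives
\[ N=\varphi(N)\,\gamma_2(N)=\varphi(N)\,\gamma_3(N)=\cdots=\varphi(N)\,\gamma_{c+1}(N)=\varphi(N). \]

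For the second step, finitely generated nilpotent groups are polycyclic and hence Noetherian (they satisfy the ascending chain condition on subgroups). Any Noetherian group is Hopfian: the chain $\ker\varphi\subseteq\ker\varphi^2\subseteq\cdots$ stabilizes at some stage $\ker\varphi^k=\ker\varphi^{k+1}$, and combined with the surjectivity of $\varphi^k$ this forces $\ker\varphi$ to be trivial. Hence $\varphi$ is injective, and therefore an automorphism of $N$.

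There is no real obstacle here: the proof is essentially bookkeeping with the lower central series and commutator identities, combined with a standard general fact about finitely generated nilpotent (more generally, polycyclic) groups, which is why the authors describe the lemma as probably well known. The only care point is to state the inductive hypothesis strongly enough to guarantee that the lift $b'$ of $b$ can be chosen inside $\gamma_{i-1}(N)$, so that $[a',b']$ actually lies in $\gamma_i(N)$ and not merely in $\gamma_2(N)$.
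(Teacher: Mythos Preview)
Your proof is correct and follows essentially the same two-step outline as the paper: first show $\varphi$ is surjective, then invoke the Hopfian property. The only difference is that where the paper simply cites a standard result (\cite[Theorem 16.2.5]{KM}: if $H\leq N$ with $N$ nilpotent and $H[N,N]=N$, then $H=N$) for the surjectivity step, you supply a self-contained inductive proof via the lower central series and commutator identities; similarly, you unpack the Hopfian property through the Noetherian ascending-chain-of-kernels argument rather than just asserting it.
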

\begin{proof}
The condition implies that Im$(\varphi)[N,N]=N$. As $N$ is nilpotent this implies that Im$(\varphi)=N$ and 
hence $\varphi$ is surjective (use \cite[Theorem 16.2.5]{KM}).
As a finitely generated nilpotent group (like all polycyclic-by-finite groups) is Hopfian, it follows that 
$\varphi$ has to be an automorphism of $N$. 
\end{proof}
\section{Orientable surface groups}

Consider the $2 g \times 2 g $ matrix  
\[\Omega=\left(\begin{array}{ccccccc}
0  & 1 & 0 & 0 & \cdots & 0 & 0 \\
-1 & 0 & 0 & 0 & \cdots & 0 & 0 \\
0 & 0 &  0 & 1 & \cdots & 0 & 0 \\
0 & 0 & -1 & 0 & \cdots & 0 & 0 \\
\vdots & \vdots & \vdots & \vdots  & \ddots & \vdots & \vdots \\
0 & 0 & 0 & 0 & \cdots & 0 & 1\\
0 & 0 & 0 & 0 & \cdots & -1 & 0
\end{array}\right).\] 
Note that $-\Omega = \Omega^{-1}$. 

\begin{lemma}
Let $S$ be a $2g \times 2g$--matrix with 
\[ S \Omega S^T = - \Omega \]
and let $p(x)$ denote the characteristic polynomial of $S$. Then, we have that 
\[ p(x) = \pm x^{2g} p(-\frac{1}{x} ).\]
\end{lemma}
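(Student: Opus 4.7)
The plan is to exploit the relation $S\Omega S^T = -\Omega$ to show that $-S^{-1}$ is similar to $S^T$, and then translate equality of characteristic polynomials into the desired functional equation for $p(x)$.

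First I would observe that, since the hypothesis already notes $\Omega^{-1} = -\Omega$, the defining relation can be rearranged as
\[ \Omega S^T \Omega^{-1} = -S^{-1}. \]
This immediately says that $-S^{-1}$ and $S^T$ are similar matrices, hence have the same characteristic polynomial. Combining this with the elementary fact that $S$ and $S^T$ share the characteristic polynomial $p(x)$, we get
\[ \det(xI - (-S^{-1})) = p(x). \]

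Next I would relate $\det(xI - (-S^{-1}))$ to $p$ directly through the eigenvalues. If $\lambda_1,\ldots,\lambda_{2g}$ are the eigenvalues of $S$ (with multiplicity), then the eigenvalues of $-S^{-1}$ are $-\lambda_i^{-1}$. A short computation yields
\[ x^{2g}\, p\!\left(-\tfrac{1}{x}\right) = \prod_{i=1}^{2g}\bigl(-1 - x\lambda_i\bigr) = \Bigl(\prod_i \lambda_i\Bigr)\prod_i\bigl(x - (-\lambda_i^{-1})\bigr) = \det(S)\cdot \det(xI - (-S^{-1})). \]
Putting the two equalities together gives $x^{2g} p(-1/x) = \det(S)\,p(x)$.

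Finally, taking determinants of the hypothesis $S\Omega S^T = -\Omega$ and using $\det(-\Omega) = (-1)^{2g}\det(\Omega) = \det(\Omega)$ (since $2g$ is even and $\det(\Omega) \neq 0$) gives $\det(S)^2 = 1$, so $\det(S) = \pm 1$. Hence
\[ p(x) = \det(S)^{-1}\,x^{2g}\,p(-1/x) = \pm\, x^{2g}\, p(-1/x), \]
as claimed. There is no significant obstacle here; the only thing to be careful about is the sign bookkeeping (the two minus signs, the relation $\Omega^{-1} = -\Omega$, and the fact that $\det(S) = \pm 1$ rather than exactly $1$), which is why the statement carries a $\pm$ rather than an equality.
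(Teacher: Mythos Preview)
Your proof is correct and follows essentially the same idea as the paper: both arguments extract from $S\Omega S^T=-\Omega$ the fact that $S^T$ (equivalently $S$) is conjugate via $\Omega$ to $-S^{-1}$, and then read off the functional equation for the characteristic polynomial together with $\det(S)=\pm 1$. The only cosmetic difference is that the paper carries this out as a single chain of determinant identities $\det(S-xI)=\det(S^T-xI)=\det(\Omega(S^{-1}+xI)\Omega)=\cdots$, whereas you phrase the second half in terms of eigenvalues; the underlying manipulation is the same.
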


\begin{proof}
Note that $S \Omega S^T = - \Omega $ implies that $\det(S)^2=1$ so $\det(S)=\pm 1$.
We compute
\begin{eqnarray*}
p(x) 	& = & \det (S-x  I) \\
	& = & \det(S^T - x I) \\
	& = & \det(\Omega S^{-1} \Omega + x \Omega \Omega)\\
	& = & \det(S^{-1} + x I)\\
	& = & \det(S^{-1})\det(I + x S)\\
	& = & \pm x^{2g} \det(S-(-\frac1x) I)\\
	& = & \pm x^{2g} p(- \frac1x ).
\end{eqnarray*}

\end{proof}

\begin{corollary}
If $\lambda$ is an eigenvalue of $S$, then also $\displaystyle -\frac{1}{\lambda}$ is an eigenvalue
for $S$.
\end{corollary}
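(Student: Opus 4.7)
The plan is to derive the corollary directly from the functional equation $p(x) = \pm x^{2g} p(-\tfrac{1}{x})$ established in the preceding lemma. First I would note that, since $S \Omega S^T = -\Omega$ forces $\det(S) = \pm 1$, the matrix $S$ is invertible, so $0$ is not an eigenvalue of $S$. In particular, any eigenvalue $\lambda$ satisfies $\lambda \neq 0$, and the expression $-\tfrac{1}{\lambda}$ makes sense.

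Next I would substitute $x = -\tfrac{1}{\lambda}$ into the identity $p(x) = \pm x^{2g} p(-\tfrac{1}{x})$. Since $-\tfrac{1}{-1/\lambda} = \lambda$, this gives
\[ p\!\left(-\tfrac{1}{\lambda}\right) = \pm \left(-\tfrac{1}{\lambda}\right)^{2g} p(\lambda). \]
By hypothesis $\lambda$ is an eigenvalue of $S$, so $p(\lambda) = 0$, and therefore $p(-\tfrac{1}{\lambda}) = 0$. Thus $-\tfrac{1}{\lambda}$ is also a root of the characteristic polynomial of $S$, i.e.\ an eigenvalue of $S$.

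There is essentially no obstacle here: the corollary is a one-line consequence of the lemma, with the only subtlety being to verify that $\lambda\neq 0$ so that the substitution is legitimate. That verification is handled by the determinant computation already used inside the proof of the lemma.
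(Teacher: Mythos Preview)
Your proof is correct and is exactly the intended argument: the paper states this corollary without proof, as an immediate consequence of the functional equation $p(x)=\pm x^{2g}p(-1/x)$ from the preceding lemma, and your derivation (including the check that $\lambda\neq 0$ via $\det(S)=\pm 1$) fills in the obvious details.
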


\begin{remark}
The analogous result for symplectic matrices, i.e. those $S$ for which $S \Omega S^T = \Omega$, 
is well known $($and can be proved in the same way as the lemma above$)$, says that $p(x) = x^{2g} p(\frac1x)$. So the eigenvalues for a symplectic matrix come in pairs $\lambda$ and 
$\displaystyle \frac1\lambda$.
\end{remark}

Now, let us consider the fundamental group of an orientable surface of genus $g$:
\[ \pi_g=\langle a_1,b_1,a_2,b_2,\ldots,a_g,b_g\;|\;
[a_1,b_1][a_2,b_2]\cdots [a_g,b_g]=1\rangle.\]

We want to study nilpotent quotients of $\pi_g$ and the $R_\infty$ property for these quotients. Therefore, take   
an integer $c>1$ and consider any automorphism $\varphi$ 
of $\pi_g/\gamma_{c+1}(\pi_g)$. Then $\varphi$ induces an automorphism $\bar\varphi$ on $\pi_g/\gamma_2(\pi_g)\cong \Z^{2g}$.
Let $S$ be the $2g\times 2g$-matrix $S$ with integral entries which represents $\bar\varphi$ with 
respect to the free generating set (in this order) 
\[ \bar{a}_1,\bar{b}_1, \bar{a}_2, \bar{b}_2,\ldots, \bar{a}_g, \bar{b}_g\]
of $\pi_g/\gamma_2(\pi_g)$. Here we used a bar to indicate the natural projection of the generators.

\begin{lemma} With the notations introduced above, it holds that 
\[ S \Omega S^T = \pm \Omega. \]
Moreover, any matrix $S$, with integer entries satisfying 
this relation can be obtained as the matrix representing
$\bar\varphi$ for some automorphism $\varphi$ of $\pi_g/\gamma_{c+1}(\pi_g)$. $($In fact any 
such $S$ is induced by an automorphism of $\pi_g$$)$. 

\end{lemma}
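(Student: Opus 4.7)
The plan is to reduce the lemma to a statement about the second nilpotent quotient $\pi_g/\gamma_3(\pi_g)$, where the defining relation of $\pi_g$ becomes a single ``bivector'' which is preserved up to sign by any automorphism. First I would give an explicit description of this quotient. Since the relator $R=[a_1,b_1]\cdots[a_g,b_g]$ lies in $\gamma_2(F_{2g})$, its image in $\gamma_2(F_{2g})/\gamma_3(F_{2g})\cong\wedge^2\Z^{2g}$ is the bivector $\omega=\sum_{i=1}^{g}\bar a_i\wedge \bar b_i$. Because $\bar R$ is central in $N_{2g,2}=F_{2g}/\gamma_3(F_{2g})$, its normal closure in $N_{2g,2}$ is just $\langle\bar R\rangle$, so
\[\pi_g/\gamma_3(\pi_g)\;\cong\; N_{2g,2}/\langle\omega\rangle,\qquad \gamma_2(\pi_g)/\gamma_3(\pi_g)\;\cong\; \wedge^2\Z^{2g}/\langle\omega\rangle.\]

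For the necessity of $S\Omega S^T=\pm\Omega$, any automorphism $\varphi$ of $\pi_g/\gamma_{c+1}(\pi_g)$ with $c\geq 2$ descends to an automorphism of $\pi_g/\gamma_3(\pi_g)$ inducing $\bar\varphi$ on $H_1$ (with matrix $S$) together with an automorphism of $\gamma_2/\gamma_3$. By naturality of commutators, the latter is induced by $\wedge^2\bar\varphi\colon\wedge^2 H_1\to\wedge^2 H_1$ through the commutator projection $q\colon\wedge^2\Z^{2g}\twoheadrightarrow\gamma_2/\gamma_3$, whose kernel is exactly $\langle\omega\rangle$. Commutativity of $q\circ\wedge^2\bar\varphi=\varphi|_{\gamma_2/\gamma_3}\circ q$ forces $\wedge^2\bar\varphi$ to preserve $\ker(q)=\langle\omega\rangle$, and since $\wedge^2\bar\varphi$ is an isomorphism it must send the generator $\omega$ to $\pm\omega$. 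Identifying alternating forms with antisymmetric matrices, the induced action of $S$ on $\wedge^2\Z^{2g}$ is $M\mapsto SMS^T$, and the bivector $\omega$ corresponds precisely to the matrix $\Omega$. Thus $\wedge^2\bar\varphi(\omega)=\pm\omega$ is equivalent to the matrix identity $S\Omega S^T=\pm\Omega$.

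For the converse, I would realize every such $S$ as the abelianization of an automorphism of $\pi_g$ itself, which then automatically descends to every nilpotent quotient. This is the classical Dehn--Nielsen--Baer theorem: the natural map $\mathrm{Out}(\pi_g)\to\{S\in\GL(2g,\Z):S\Omega S^T=\pm\Omega\}$ is surjective. A more self-contained route is to choose a small generating set for the target (for instance the elementary symplectic transvections, together with one ``anti-symplectic'' involution such as $a_1\leftrightarrow b_1$) and to realize each generator by an explicit automorphism of $F_{2g}$ that sends $R$ to a conjugate of $R^{\pm 1}$, hence descends to $\pi_g$. The main obstacle is precisely this last lifting step: verifying that every transvection in the target can be realized as an automorphism of $\pi_g$ is tedious by hand, so I would either invoke Dehn--Nielsen--Baer directly, or work through a parsimonious generating set such as Humphries' Dehn twist generators on $S_g$.
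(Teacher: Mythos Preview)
Your proof is correct and follows the same overall strategy as the paper: reduce to the $2$-step quotient, observe that the image of the relator $r$ in $\gamma_2(N_{2g,2})$ must be sent to $\pm r$, translate this into the matrix identity, and then cite a classical result for the converse. The difference lies only in how the middle step is carried out. The paper lifts $\varphi$ to an automorphism $\tilde\varphi$ of $N_{2g,2}$ via the Hopfian property, deduces $\tilde\varphi(r)=r^{\pm1}$, and then expands $\prod_i[\tilde\varphi(a_i),\tilde\varphi(b_i)]$ explicitly in coordinates, matching the coefficients of each basic commutator $[a_j,a_k]$, $[a_j,b_k]$, $[b_j,b_k]$ against the entries of $S\Omega S^T$ one case at a time. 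Your argument replaces this computation by the identification $\gamma_2(N_{2g,2})\cong\wedge^2\Z^{2g}$, under which $r$ becomes the bivector $\omega$ corresponding to $\Omega$ and the induced action is $M\mapsto SMS^T$; the invariance of $\ker q=\langle\omega\rangle$ then drops out from naturality of the commutator map. This is cleaner and avoids the case analysis, at the cost of assuming the reader is comfortable with the $\wedge^2$ formalism. For the converse, the paper cites Theorem~N.13 of Magnus--Karrass--Solitar, which is exactly the surjectivity statement you attribute to Dehn--Nielsen--Baer, so there is no substantive difference there.
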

\begin{proof}
This Lemma is known to hold for all automorphisms of the full group $\pi_g$ (See \cite[Corollary 5.15, page 355]{MKS})
but here we show that it also holds for all quotients $\pi_g/\gamma_{c+1}(\pi_g)$ with $c>1$. Of course,
the second part of the statement that all such $S$ can be achieved follows from the fact that all such matrices can 
be obtained as the induced map of an automorphism of $\pi_g$ (\cite[Theorem N.13, page 178]{MKS}). 

\medskip

Since any automorphism of $\pi_g/\gamma_{c+1}(\pi_g)$
induces one on $\pi_g/\gamma_3(\pi_g)$ (with the same $S$), it is enough to consider the case 
$c=2$. Let $\displaystyle N_{2g,2}=\frac{F_{2g}}{\gamma_3(F_{2g})}$ be the free 2--step nilpotent group on $2g$ generators, which we will still denote by $a_1,b_1,a_2,b_2,\ldots,a_g,b_g$. 
Let $r=[a_1,b_1][a_2,b_2]\cdots [a_g,b_g]\in N_{2c,2}$ and let $C$ be the cyclic subgroup generated by $r$, then 
$\pi_g/\gamma_3(\pi_g)=N_{2c,2}/C$. 

Any automorphism $\varphi$ of $\pi_g/\gamma_3(\pi_g)$ lifts to a morphism $\tilde{\varphi}:N_{2c,2}\rightarrow
N_{2c,2}$. This map, in turn induces a morphism $\overline{\tilde{\varphi}}:N_{2c,2}\rightarrow 
N_{2c,2}/\gamma_2(N_{2c,2})$ which is surjective.
By Lemma~\ref{hopfian} we can conclude that $\tilde{\varphi}$ is an automorphism of $N_{2c,2}$. 
As $\tilde{\varphi}$ is a lift of $\varphi$, we must have that $\tilde{\varphi}(C) \subseteq C$ and by
reasoning in the same way for $\varphi^{-1}$, we must in fact have that 
$\tilde\varphi(C)=C$  and this implies that 

\[
\tilde\varphi( r ) = r^{\pm1}
\]
or 
\[ 
\prod_{i=1}^g[\tilde\varphi(a_i),\tilde\varphi(b_i)]=
(\prod_{i=1}^g[a_i,b_i])^{\pm 1}.
\]
Since the group $N_{2g,2}$ is 2--step nilpotent, we have that (with $s_{p,q}$ the $(p,q)$-th entry of $S$): 
\[ [\tilde\varphi(a_i),\tilde\varphi(b_i)] = [\prod_{j=1}^g a_j^{s_{2j-1,2i-1}}b_j^{s_{2j,2i-1}},
\prod_{k=1}^{g} a_k^{s_{2k-1,2i}}b_k^{s_{2k,2i}}]. \]
For the convenience of writing, we will now use the additive notation in $\gamma_2(N_{2c,2})$.

The condition above then reads as 
\begin{equation}\label{bigcondition}
\sum_{i=1}^g \sum_{j=1}^g \sum_{k=1}^g 
( s_{2j-1,2i-1} s_{2k-1,2i} [a_j,a_k] + s_{2j-1,2i-1} s_{2k,2i} [a_j,b_k] +s_{2j,2i-1}s_{2k-1,2i} [b_j,a_k]+
\end{equation}
\[ \hspace*{5cm}
s_{2j,2i-1} s_{2k,2i} [b_j,b_k]) = \pm( \sum_{l=1}^g [a_l,b_l] ). \]

Now, let us look at the condition $S\Omega S^T= \pm \Omega$.
Denote by $C_1,C_2,\ldots, C_{2g}$ the columns of $S$. Then 

\[ S \Omega = (-C_2, C_1,-C_4,C_3,\ldots,-C_{2g}, C_{2g-1}). \] 
It follows that the $(p,q)$-th entry of $S\Omega S^T$ equals
\[ \sum_{i=1}^g ( - s_{p,2 i} s_{q,2i-1} + s_{p,2i-1}s_{q,2i} ).\]

Note that if $p=q$, this entry is zero, and also the $(p,p)$-th entry of $\pm \Omega$ is. So, we only have to consider the case $p\neq q$ from now onwards.\\
We now consider the different possibilities for $p$ and $q$:
\begin{itemize}
\item Both $p$ and $q$ are even. In this case the condition $S\Omega S^T= \pm \Omega$ says that the 
$(p,q)$-th entry of $S\Omega S^T$ has to be zero. Let $p=2j$ and $q=2k$, so the $(p,q)$-th entry is 
 \[ \sum_{i=1}^g ( - s_{2j ,2 i} s_{2k ,2i-1} + s_{2j ,2i-1}s_{2k ,2i} ).\]
By looking at the coefficient appearing in front of $[b_j,b_k]$ (and using the fact that $[b_k,b_j]=-[b_j,b_k])$ in 
equation \eqref{bigcondition} we find that 
\[ \sum_{i=1}^g s_{2j,2i-1}s_{2k,2i}-s_{2k,2i-1}s_{2j,2i} =0\]
which exactly says that the $(p,q)$-th entry of $S\Omega S^T$ must be zero.
\item When both $p$ and $q$ are odd, one can do the same argument by considering the coefficient appearing in front of $[a_j,a_k]$ in equation \eqref{bigcondition}. 
\item Now consider the case when $p$ is odd, say $p=2 j-1$ and $q$ is even, say $q=2 k$. By looking at the 
coefficient of $[a_j,b_k]$ in the expression \eqref{bigcondition} we find the relation
\[ \sum_{i=1}^g s_{2j-1,2i-1} s_{2k,2i} - s_{2k,2i-1}s_{2j-1,2i} = \pm \delta_{j,k}\]
(where $\delta_{j,k}$ is the Kronecker delta). \\
This is saying that  $(2j-1,2k)$-entry of $S\Omega S^T$ equals 
$\pm \delta_{j,k}$, which is what we needed to show.
\item The situation where $p$ is even and $q$ is odd reduces to the previous case.
\end{itemize}
\end{proof}

\medskip

Just as in our paper \cite{DG}, we will make use  use of the associated graded Lie ring of the group 
we are interested in, so we consider the Lie ring 
\[ L(\pi_g)= \bigoplus_{i=1}^\infty L_i(\pi_g) =  \bigoplus_{i=1}^\infty \frac{\gamma_i(\pi_g)}{\gamma_{i+1}(\pi_g)}\]
where the Lie bracket is induced by the commutators in $\pi_g$.
Then, there is a canonical homomorphism of Lie rings
\[ \psi : L(F_{2g}) \to L(\pi_g)\] 
which is induced by the projection $F_{2g}\to \pi_g$. Now, 
let $r = [a_1,b_1][a_2,b_2]\cdots [a_g,b_g] + \gamma_3(F_{2g}) \in L_2(F_{2g})$
be the element in $\gamma_2(F_{2g})/\gamma_3(F_{2g})$ corresponding to the relator 
defining $\pi_g$ and let $R$ be the ideal of  $ L(F_{2g}) $ which is generated by $r$.
Then the main result of \cite{L}
shows that $R$ is the kernel of $\psi$ and that all factor groups $\displaystyle\frac{\gamma_i(\pi_g)}{\gamma_{i+1}(\pi_g)}$ are free abelian. 


\medskip

Consider an automorphism $\varphi$ of $\pi_g/\gamma_{c+1}(\pi_g)$ for $c\geq 2$. There is no reason to expect that this automorphism can be lifted to an automorphism of $\pi_g$.
 However, if $S$ represents the induced automorphism on the abelianization of $\pi_g$, then we know that there does exist an automorphism $\varphi'$  of $\pi_g$, whose induced automorphism on the abelianization is also represented by $S$. Let $\varphi_1$ be the automorphism 
 of $\pi_g/\gamma_{c+1}(\pi_g)$ which is induced by $\varphi'$.
 
As the induced automorphisms on the 
quotients $\gamma_i(\pi_g)/\gamma_{i+1}(\pi_g)$ are completely determined by the induced 
automorphism on $\pi_g/\gamma_2(\pi_g)$, it follows that $\varphi$ will have infinite 
Reidemeister number if and only if $\varphi'$ has infinite Reidemeister 
number (by Lemma~\ref{eigenvaluecriterium}). So in our discussions below, we will always be allowed to assume that the automorphism 
$\varphi$ can be lifted to $\pi_g$.

Moreover, by \cite[Theorem N.10, page 176]{MKS} any automorphism of $\pi_g$
 is induced by an automorphism of $F_{2g}$.

\begin{theorem}\label{orient}
Let $g\geq 2$. Then, for any $c\geq 4$, we have that the $c$-step nilpotent quotient 
$\pi_g / \gamma_{c+1} (\pi_g)$ has property $R_\infty$. On the other hand, for $c=1,2$ and $3$,
this quotient does not have property $R_\infty$.
I.e.\ the $R_\infty$-nilpotency degree of $\pi_g$ is 4.
\end{theorem}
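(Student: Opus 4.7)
The plan is to combine Lemma~\ref{eigenvaluecriterium} with Labute's identification $L(\pi_g)=L(F_{2g})/\langle r\rangle$ (where $r\in L_2(F_{2g})$ is the image of the relator $[a_1,b_1]\cdots[a_g,b_g]$) and with the constraint $S\Omega S^T=\pm\Omega$ from the preceding lemma. By the discussion immediately before the theorem, any automorphism $\varphi$ of $\pi_g/\gamma_{c+1}(\pi_g)$ may be taken to be induced by an automorphism of $F_{2g}$; denote by $S$ its abelianization matrix.

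For the direction $c\geq 4$ implies $R_\infty$, it suffices to treat $c=4$: for $c>4$ any automorphism of $\pi_g/\gamma_{c+1}(\pi_g)$ descends to one of $\pi_g/\gamma_5(\pi_g)$ acting on the same $L_1,\ldots,L_4$, so the presence of eigenvalue $1$ transfers. In the symplectic case $S\Omega S^T=\Omega$, the eigenvalues of $S$ pair as $(\lambda_k,\lambda_k^{-1})$ and Lemma~\ref{eigenvalues} (applied to $F_{2g}$) produces a $g$-dimensional $1$-eigenspace of $\varphi_2$ on $L_2(F_{2g})\otimes\Q=\Lambda^2\Q^{2g}$, spanned by the $e_{2k-1}\wedge e_{2k}$ in the eigenbasis of $S$. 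Since $r$ corresponds to the symplectic bivector $\sum_k e_{2k-1}\wedge e_{2k}$ and is $\varphi_2$-fixed, quotienting by $\langle r\rangle$ removes only one dimension, leaving a $1$-eigenspace in $L_2(\pi_g)$ of dimension at least $g-1\geq 1$. In the antisymplectic case $S\Omega S^T=-\Omega$, the pairs become $(\lambda_k,-\lambda_k^{-1})$ and eigenvalue $1$ first appears generically at level $4$ via $\lambda_k(-\lambda_k^{-1})\lambda_l(-\lambda_l^{-1})=1$. The Witt character formula $\mathrm{ch}(L_4)=\tfrac14(p_1^4-p_2^2)$ applied to this pairing yields a $1$-eigenspace in $L_4(F_{2g})\otimes\Q$ of dimension $g(3g-2)$. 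On the other hand $\varphi_2(r)=-r$, so $R_4=[[r,L_1(F_{2g})],L_1(F_{2g})]$ carries only eigenvalues $-\mu\nu$; requiring $\mu\nu=-1$ forces $\{\mu,\nu\}$ to be an intra-pair set, bounding the $1$-eigenspace of $R_4$ by $2g$. Since $g(3g-2)-2g=g(3g-4)\geq 2$ for $g\geq 2$, eigenvalue $1$ persists in $L_4(\pi_g)$, and Lemma~\ref{eigenvaluecriterium} gives $R(\varphi)=\infty$.

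For the direction $c\leq 3$ implies not $R_\infty$, it suffices to exhibit one $S$ for which no eigenvalue $1$ appears on $L_i(F_{2g})$ with $i\leq 3$. Take $S=A\oplus\cdots\oplus A$ ($g$ blocks) with $A=\begin{pmatrix}2&1\\1&0\end{pmatrix}\in\GL(2,\Z)$. From $\det A=-1$ one checks $AJA^T=-J$ (where $J=\begin{pmatrix}0&1\\-1&0\end{pmatrix}$) and hence $S\Omega S^T=-\Omega$, so by the previous lemma $S$ lifts to an automorphism $\varphi$ of $\pi_g$. The eigenvalues of $S$ are $\lambda=1+\sqrt 2$ and $-\lambda^{-1}=1-\sqrt 2$, each with multiplicity $g$. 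Any $i$-fold product takes the form $(-1)^b\lambda^{a-b}$ with $a+b=i$; since $\lambda$ is not a root of unity, this equals $1$ only when $a=b$ and $b$ is even, forcing $4\mid i$. Hence $L_i(F_{2g})$ has no eigenvalue $1$ for $i\in\{1,2,3\}$, the same is true for the quotient $L_i(\pi_g)$, and Lemma~\ref{eigenvaluecriterium} gives $R(\varphi)<\infty$ for the induced automorphism on $\pi_g/\gamma_4(\pi_g)$; the cases $c=1,2$ follow by passing to the further quotients.

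The main obstacle is the multiplicity bookkeeping for $L_4(F_{2g})$ and $R_4$ in the antisymplectic case: one has to identify both $1$-eigenspace dimensions precisely enough to see that the strict inequality $g(3g-2)>2g$ holds for $g\geq 2$. Everything else (the symplectic case, the descent between values of $c$, and the construction of the explicit example) is essentially bookkeeping.
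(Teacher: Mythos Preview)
Your overall strategy coincides with the paper's: split into the symplectic and antisymplectic cases via $S\Omega S^T=\pm\Omega$, locate the eigenvalue $1$ on the graded pieces of $L(\pi_g)=L(F_{2g})/R$, and for $c\le 3$ exhibit an explicit antisymplectic $S$ with eigenvalues $1\pm\sqrt2$ (your block $\left(\begin{smallmatrix}2&1\\1&0\end{smallmatrix}\right)$ has the same spectrum as the paper's $\left(\begin{smallmatrix}1&2\\1&1\end{smallmatrix}\right)$). The symplectic case at level $2$ is handled identically.

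The gap is in the antisymplectic case at level $4$. Your bound ``the $1$-eigenspace of $R_4$ has dimension at most $2g$'' rests on the implication $\mu\nu=-1\Rightarrow\{\mu,\nu\}=\{\lambda_k,-\lambda_k^{-1}\}$, which is correct as a statement about \emph{values}, but gives the dimension bound $2g$ only when all eigenvalues of $S$ are simple. If some $\lambda_k$ coincide, the number of ordered pairs $(i,j)$ of eigenvector indices with $\mu_i\mu_j=-1$ exceeds $2g$, and your inequality $g(3g-2)>2g$ no longer controls the quotient. You also do not treat the degenerate case $\pm i\in\mathrm{Spec}(S)$ (where a pair collapses) nor reduce to semisimple $S$; both are needed for your eigenvector count to make sense. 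The paper avoids this multiplicity bookkeeping by passing to the metabelian quotient $L/L^{(2)}$ (so that $\bar R_4$ is spanned by the symmetric brackets $[[\bar r,Y_i],Y_j]$ with $i\le j$), disposing of $\pm i$ separately at level $2$, and then \emph{injecting} a spanning set of the $1$-eigenspace of $\bar R_4^{\C}$ into independent Chen basis $1$-eigenvectors $[[[Y_j,Y_i],Y_i],Y_j]$ of $L_4^{\C}$, together with one extra independent $1$-eigenvector $[[[Y_2,Y_1],Y_3],Y_4]$. That argument works uniformly for arbitrary multiplicities, which is exactly what your Witt-formula count is missing.
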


\begin{proof}
For $c=1$, we have that $\pi_g/\gamma_2(\pi_g)\cong \Z^{2g}$ and this group does not have property $R_\infty$.

\medskip

From now onwards, we consider the situation with $c>1$.

Let  $S_g$ be the following $2g \times 2g$--matrix:
\[ S_g=
\left(\begin{array}{ccccccc}
1  & 2 & 0 & 0 & \cdots & 0 & 0 \\
1 & 1 & 0 & 0 & \cdots & 0 & 0 \\
0 & 0 &  1 & 2 & \cdots & 0 & 0 \\
0 & 0 &  1 & 1 & \cdots & 0 & 0 \\
\vdots & \vdots & \vdots & \vdots  & \ddots & \vdots & \vdots \\
0 & 0 & 0 & 0 & \cdots & 1 & 2\\
0 & 0 & 0 & 0 & \cdots & 1 & 1
\end{array}\right).
\] 

Then $S_g$ satisfies $S_g \Omega S_g^T = - \Omega$ and $S_g$ only has eigenvalues
$\lambda= 1+\sqrt{2}$ and $-\frac{1}{\lambda}= 1 -\sqrt{2}$ (each one with multiplicity $g$).

Consider an automorphism $\varphi$ of $\pi_g$ for which the induced morphism $\bar\varphi$ on the abelianization of $\pi_g$ is given by $S_g$. As already mentioned above,
this automorphism  is induced by 
an automorphism $\tilde{\varphi}$ of $F_{2 g}$.

By Lemma~\ref{eigenvalues}, we know that the eigenvalues of induced automorphisms 
on $\gamma_2(F_{2g})/\gamma_3(F_{2g})$ resp.\  $\gamma_3(F_{2g})/\gamma_4(F_{2g})$
consist of products of 2 resp. 3 eigenvalues of $S$. It is obvious that none of these eigenvalues can be equal to 1. As the induced maps on $\gamma_2(\pi_g)/\gamma_3(\pi_g)$ resp.\  $\gamma_3(\pi_g)/\gamma_4(\pi_g)$ are themselves induced from the maps in the free case, it also follows that in this case none of the eigenvalues equals 1. Hence, when $c\leq 3$, $\varphi$ 
induces an automorphism of $\pi_g/\gamma_{c+1}(\pi_g)$ with finite Reidemeister number and so
these groups do not have property $R_\infty$.

\medskip

Now we will prove that $\pi_g/\gamma_5(\pi_g)$ does have property $R_\infty$. From this 
it then also follows that $\pi_g/\gamma_{c+1}(\pi_g)$ has property $R_\infty$ for all $c\geq 4$.

\medskip

Let $\varphi$ be any automorphism of $\pi_g/\gamma_5(\pi_g)$. From the discussion before this theorem, we can without loss of generality assume that $\varphi$ is induced by an automorphism
$\tilde{\varphi}$ of $F_{2g}$. Let $\tilde{\varphi}_\ast$ denote the graded Lie ring automorphism of $L(F_{2g})$ induced by $\tilde\varphi$ and $\varphi_\ast$ the automorphism 
of $L(\pi_g)$ induced by $\varphi$. Since $\varphi$ is induced by $\tilde\varphi$, we have 
that  $\varphi_\ast$ is induced by $\tilde\varphi_\ast$ and 
\[ \tilde\varphi_\ast(r) = \pm r,\mbox{ where }
r= [a_1,b_1][a_2,b_2]\cdots [a_g,b_g] + \gamma_3(F_{2g}) \in L_2(F_{2g})
\mbox{ as above.}\]

We have to prove that the map induced by $\varphi_\ast$ on $L(\pi_g)/\gamma_5(L(\pi_g))$
has at least one eigenvalue equal to 1. 

Let $S$ still denote the matrix representing the induced automorphism on $\pi_g/[\pi_g,\pi_g]$.

\medskip

We now distinguish two cases.

\medskip

{\bf Case 1:} $S^T \Omega S = \Omega$. This case occurs exactly when $\tilde\varphi_\ast(r)=r$.
In this situation, $S$ has eigenvalues 
\[\lambda_1,\; \frac{1}{\lambda_1},\; \lambda_2,\;\frac{1}{\lambda_2}, \ldots, \lambda_g ,
\;\frac{1}{\lambda_g}.\]
So, the induced map on $L_2 (F_{2g})= \gamma_2(F_{2g})/\gamma_3(F_{2g})$ 
has eigenvalue 1 with 
at least multiplicity $g$, since the eigenvalues of this map are products of 2 eigenvalues 
of $S$ and we have that $1=\lambda_i (1/\lambda_i)$ for all $i$. 

The projection of $L_2(F_{2g})$ onto $\displaystyle L_2(\pi_g)= \frac{L_2(F_{2g})}{L_2(F_{2g})\cap R}=
L_2(F_{2g})/\langle r \rangle$ is given by dividing out the 1-dimensional subspace 
generated by $r$. So the induced map on $L_2(\pi_g)$ has eigenvalue 1 with multiplicity at least $g-1>0$, which shows that $\varphi$ has an infinite Reidemeister number in this case.

\bigskip

{\bf Case 2:} $S^T \Omega S = -\Omega$. This case occurs exactly when $\tilde\varphi_\ast(r)=-r$.

First of all, suppose that one of the eigenvalues of $S$ is $\pm i$. In that case both $i$ and $-i$ 
appear as an eigenvalue of $S$ and the their product $i \times -i=1 $ is an eigenvalue 
for the induced map on $L_2 (F_{2g})$. Since $\tilde\varphi_\ast(r)=-r$ corresponds to  
eigenvalue $-1$, we have that the induced map on $\displaystyle L_2(\pi_g)= 
L_2(F_{2g})/\langle r \rangle$ also has an eigenvalue $1$ and so we are done in this case.

Hence we can assume that $\pm i$ do not appear as eigenvalues of $S$ and 
in this situation, $S$ has eigenvalues 
\[\lambda_1,\; \frac{-1}{\lambda_1},\; \lambda_2,\;\frac{-1}{\lambda_2}, \ldots, \lambda_g ,
\;\frac{-1}{\lambda_g}.\]
In which not all the $\lambda_i$ need to be different. 

\medskip

We will show that the induced map on $L(\pi_g)/\gamma_5(L(\pi_g))$ has 1 as an eigenvalue. Note that now $\gamma_5(L(\pi_g))$ is used to denote the 5-th term in the lower central series of the Lie ring $L(\pi_g)$. As $L(\pi_g)$ is graded and is generated by its terms in degree one, we have that $\gamma_5(L(\pi_g))=  L_5(\pi_g)\oplus L_6(\pi_g)\oplus L_7( \pi_g) \oplus \cdots$. In fact, we will show that the induced map on 
\[ \frac{L(\pi_g)}{\gamma_5(L(\pi_g))+L^{(2)}(\pi_g)}\]
has 1 as an eigenvalue. Here $L^{(2)}(\pi_g)$ denotes the second derived Lie ring, 
so we consider a metabelian and 4-step nilpotent quotient of $L(\pi_g)$.
Note that 
\[\frac{L(\pi_g)}{\gamma_5(L(\pi_g))+L^{(2)}(\pi_g)}=
\frac{L(F_{2g})}{\gamma_5(L(F_{2g}))+L^{(2)}(F_{2g})+R}=\frac{L(F_{2g})}{\gamma_5(L(F_{2g}))+L^{(2)}(F_{2g})}\raisebox{-1mm}{\Huge$/$}\bar{R}\]
where $\bar{R}$ is the natural image of the ideal $R$ in this free metabelian and 4-step nilpotent 
Lie ring $\displaystyle  
\frac{L(F_{2g})}{\gamma_5(L(F_{2g}))+L^{(2)}(F_{2g})}$. To avoid complicated notations, let us denote this last Lie ring by $L$.
If we denote by $\bar{r}$ the image of $r$ in $L$, then $\bar{R}$ is the ideal of $L$ generated 
by $\bar{r}$. 

\medskip

Now, we again consider the complexification of our Lie algebra $L^\C= L\otimes \C$. So 
$L^\C$ is the free metabelian and 4-step nilpotent 
Lie algebra over $\C$. Then 
$\bar{R}^\C=\bar{R}\otimes \C$ is the ideal of $L^\C$ generated by $\bar{r}$. Note that 
$L^\C=L^\C_1\oplus L^\C_2\oplus L^\C_3\oplus L^\C_4$ is graded and also 
$\bar{R}^\C=\bar{R}^\C_2 \oplus \bar{R}^\C_3 \oplus \bar{R}^\C_4$ is graded, since it is 
generated by an element of degree 2. 

\medskip

As in the prove of Lemma~\ref{eigenvalues} we can assume that $S$ is semi-simple.

Now, we can choose basis vectors $Y_1,Y_2,\ldots,Y_{2g}$ in $L^\C_1$ which are eigenvectors
for the map induced by $\tilde\varphi_\ast$ (i.e.\ for $S$). 
So $Y_1$ is eigenvector corresponding to $\lambda_1$, $Y_2$ to $\frac{-1}{\lambda_1}$,
$Y_3$ to $\lambda_2$, \ldots

Using these basis vectors, we can now describe $\bar{R}^\C$. 

Indeed $\bar{R}^\C_3$ is spanned by the vectors
\[ [\bar r,Y_1],\;[\bar r,Y_2], \;[\bar r,Y_3],\;[\bar r,Y_4],\ldots, [\bar r,Y_{2g-1}],\;[\bar r,Y_{2g}] \]
Note that these vectors are eigenvectors corresponding to the eigenvalues
\[-\lambda_1,\; \frac{1}{\lambda_1},\; -\lambda_2,\;\frac{1}{\lambda_2}, \ldots, -\lambda_g ,
\;\frac{1}{\lambda_g}\]
in that order.


To find $\bar{R}^\C_4$ we have to take the Lie bracket of any spanning vector of $\bar{R}^\C_3$
with any generator $Y_j$. However, as we are  working in a metabelian Lie algebra, we have that 
\[ \forall i,j:\; [[\bar r,Y_i],Y_j]=[[\bar r,Y_j],Y_i] \]
and hence we find that $\bar{R}^\C_4$ is spanned by all vectors of the form 
\[ [[\bar r,Y_i],Y_j] \mbox{ where }i\leq j.\]


Again, all of these vectors are eigenvectors. If $\lambda$ is the eigenvalue for $Y_i$ and 
$\mu$ is the eigenvalue for $Y_j$ then $- \lambda \mu$ is the eigenvalue for $[[r,Y_i],Y_j]$.

In order to finish the proof, we need to sow that $L^\C_4$ contains eigenvectors with respect 
to eigenvalue 1 which are not contained in $\bar{R}^\C_4$. Recall (\cite{C}) that $L^\C_4$ has a basis 
of eigenvectors which is given by 
\[ [[[Y_{i_1},Y_{i_2}],Y_{i_3}],Y_{i_4}]\mbox{ for } i_1> i_2\leq i_3 \leq i_4.\]
Now, if $[[\bar r,Y_i],Y_j] $ is an eigenvector for eigenvalue 1, then with the notations above, we have that $\lambda \mu=-1$. As we have already excluded the case $\lambda=\pm i$, this is only possible 
when $i\neq j$ (so $i<j$).  
To each such a vector $[[\bar r,Y_i],Y_j]$ of eigenvalue 1, we associate  the vector  
\[ [[[Y_{j},Y_{i}],Y_{i}],Y_{j}] \in L^\C_4\]
which is also an eigenvector for the eigenvalue 1. These associated vectors span already a subspace of $L^\C_4$ which is at least  
as big as the eigenspace corresponding to eigenvalue 1 
inside $\bar{R}^\C_4$. 
Since 
\[ [[[Y_2,Y_1],Y_3],Y_4] \]
is also an eigenvector of eigenvalue 1, which is linearly independent of the previous ones, we see that $L^\C_4$ does contain eigenvectors with respect to eigenvalue 1 outside $\bar{R}^\C_4$.
And this finishes the proof.
\end{proof}


\begin{corollary}\label{degsolv} Let  $g\geq 2 $ and  $G=\pi_g/\pi_g^{(2)}$. The group $G/\gamma_{c+1}(G)$ has the $R_{\infty}$ property, if and only if $c\geq 4$. Further,
the $R_{\infty}$-solvability degree of $\pi_g$ is 2, i.e.  
$\pi_g/\pi_g^{(c)}$ has the $R_{\infty}$ property, if and only if $c\geq 2$.
\end{corollary}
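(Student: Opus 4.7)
Both parts of the corollary reduce to the results already established for nilpotent quotients of $\pi_g$, combined with the elementary observation that if $K$ is a $\varphi$-invariant normal subgroup of a group $H$, then Reidemeister classes of $\varphi$ project surjectively onto those of the induced map on $H/K$; in particular, if $K$ is characteristic and $H/K$ has property $R_\infty$, then so does $H$.

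For the first claim, note that $\pi_g^{(2)} \subseteq \gamma_4(\pi_g)$, hence $G/\gamma_{c+1}(G) = \pi_g/\gamma_{c+1}(\pi_g)$ for $c=1,2,3$, and these fail $R_\infty$ by Theorem~\ref{orient}. For $c \geq 4$ I would revisit the proof of Theorem~\ref{orient}: Case~$1$ already exhibits an eigenvalue~$1$ in $L_2(\pi_g) = \gamma_2(\pi_g)/\gamma_3(\pi_g)$, a quotient which coincides with $\gamma_2(G)/\gamma_3(G)$; Case~$2$ was set up directly inside the metabelian $4$-step nilpotent Lie ring $L(\pi_g)/(\gamma_5(L(\pi_g)) + L^{(2)}(\pi_g))$ and produces the eigenvalue~$1$ in that quotient. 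The preliminary reduction allowing one to assume an automorphism of $G/\gamma_{c+1}(G)$ is induced from one of $\pi_g$ carries over unchanged: the induced matrix $S$ on $G^{\mathrm{ab}} \cong \Z^{2g}$ still satisfies $S\Omega S^T = \pm \Omega$ (the relevant Lemma uses only the $2$-step nilpotent quotient, which is the same for $\pi_g$ and for $G$), and such $S$ is realised by some automorphism of $\pi_g$ via the theorem of MKS invoked in the proof of Theorem~\ref{orient}. This establishes $R_\infty$ for $G/\gamma_5(G)$; for $c > 4$, property $R_\infty$ is then inherited by $G/\gamma_{c+1}(G)$ through the characteristic subgroup $\gamma_5(G)/\gamma_{c+1}(G)$.

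For the second claim, $\pi_g/\pi_g^{(1)} \cong \Z^{2g}$ clearly lacks $R_\infty$, handling $c=1$. For $c \geq 2$, the quotient $\pi_g^{(2)}/\pi_g^{(c)}$ is the second derived subgroup of $\pi_g/\pi_g^{(c)}$, hence characteristic, so by the projection principle it suffices to show that $G = \pi_g/\pi_g^{(2)}$ itself has property $R_\infty$. This follows from the first claim at $c=4$ by one further application of the same principle, since $G$ surjects onto $G/\gamma_5(G)$ with characteristic kernel $\gamma_5(G)$.

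The main technical point I would verify carefully is that the eigenvalue~$1$ produced in Case~$2$ of Theorem~\ref{orient}, which a priori lives in the degree-$4$ component of the free metabelian $4$-step nilpotent Lie ring modulo $\bar{R}$, corresponds to a genuine eigenvalue~$1$ on some $\gamma_i(G/\gamma_5(G))/\gamma_{i+1}(G/\gamma_5(G))$, so that Lemma~\ref{eigenvaluecriterium} applies to $G/\gamma_5(G)$. This amounts to identifying the associated graded Lie ring of $G/\gamma_5(G)$ with the appropriate quotient of $L(\pi_g)$, which is a standard consequence of Labute's theorem already invoked before the statement of Theorem~\ref{orient}.
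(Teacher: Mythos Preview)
Your proposal is correct and follows essentially the same approach as the paper's proof: both observe that for $c\leq 3$ the metabelian quotient makes no difference, and that for $c\geq 4$ the proof of Theorem~\ref{orient} was already carried out in the metabelian $4$-step nilpotent Lie ring, so it literally establishes $R_\infty$ for $G/\gamma_5(G)$ and hence for $G$ and all further quotients via characteristic projections. Your write-up is in fact more explicit than the paper's about why the reduction to automorphisms induced from $\pi_g$ still goes through for $G/\gamma_{c+1}(G)$ (the relevant lemma uses only the $2$-step quotient), a point the paper leaves implicit.
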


\begin{proof} 
Note that $G/\gamma_{c+1}(G)= \pi_g/(\gamma_{c+1}(\pi_g)\pi_g^{(2)})$.\\
 For the first part, if $c=1, 2, 3$ then $\pi_g^{(2)}\subseteq \gamma_{c+1}(\pi_g)$ and so $G/\gamma_{c+1}(G)= \pi_g/(\gamma_{c+1}(\pi_g)\pi_g^{(2)}) = \pi_g/\gamma_{c+1}(\pi_g)$. The result in this case then 
 follows directly from the theorem above. 
 
 For $c>3$ the proof of the theorem above is in fact a proof showing that the group  $G/\gamma_{c+1}(G)$ has property $R_\infty$.
  If follows that $G=\pi_g/\pi_g^{(2)}$ itself also has property $R_\infty$, which already proves one step of the ``further''-part.
 
 Clearly $G/G^{(1)}$, the abelianization of $G$ which is the same of the abelianization of $\pi_g$, does not have property $R_{\infty}$. 
\end{proof}

\section{Non-orientable surface groups}

First we need a lemma concerning powers in nilpotent groups, which can be found in \cite{Se} Proposition 2, page 113. 

\begin{lemma}\label{segal}
Let $G$ be a nilpotent group of class $\leq c$. For any $s\in \N$ we have that every element of 
\[ G^{s^c}=\langle g^{s^c}\:|\; g\in G\rangle\]
is the $s$-th power of an element of $G$.  
\end{lemma}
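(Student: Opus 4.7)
The proof goes by induction on the nilpotency class $c$. For $c=1$, $G$ is abelian and the identity $(xy)^s = x^s y^s$ shows that $G^s$ already coincides with the set $\{g^s : g\in G\}$ of $s$-th powers.

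For the inductive step with $c\geq 2$, set $Z=\gamma_c(G)$, which is central in $G$, and let $\bar G = G/Z$, a group of nilpotency class at most $c-1$. Given $w\in G^{s^c}$, since $G^{s^c}\subseteq G^{s^{c-1}}$ the image $\bar w$ lies in $\bar G^{s^{c-1}}$, so by the inductive hypothesis applied to $\bar G$ we get $\bar w=\bar v^{\,s}$ for some $v\in G$; equivalently, $w=v^s z$ for some $z\in Z$. Because $Z\subseteq Z(G)$, for any $t\in Z$ we have $(vt)^s = v^s t^s$, so it suffices to exhibit $t\in Z$ with $t^s = z$, and then $v' := vt$ satisfies $(v')^s = w$.

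The crux is that the stronger assumption $w\in G^{s^c}$, rather than merely $w\in G^{s^{c-1}}$, forces the central defect $z$ to be an $s$-th power in the abelian group $Z$. The tool is the Hall--Petresco collection formula
\[ (xy)^n = x^n y^n \, \tau_2(x,y)^{\binom{n}{2}} \tau_3(x,y)^{\binom{n}{3}} \cdots \tau_c(x,y)^{\binom{n}{c}}, \qquad \tau_i(x,y)\in\gamma_i(\langle x,y\rangle), \]
combined with a secondary induction on the length of $w$ as a product of $s^c$-th powers. Each single factor $g^{s^c}$ is already an $s$-th power (namely of $g^{s^{c-1}}$), and merging two consecutive $s$-th powers $a^s b^s$ via Hall--Petresco produces $(ab)^s$ times correction terms $\tau_i(a,b)^{-\binom{s}{i}}$ lying in $\gamma_2(G)$, whose deeper contributions can then be analyzed modulo $\gamma_3(G),\gamma_4(G),\ldots$ The accumulated central contribution landing in $Z=\gamma_c(G)$ is a $\mathbb Z$-linear combination of expressions of the form $\binom{s^c}{i}$ for $2\leq i\leq c$, applied to appropriate iterated commutators.

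The main obstacle I anticipate is this last divisibility bookkeeping: one has to verify that every central exponent accumulated through the collection process is divisible by $s$, so that $z\in Z^s$. This reduces to a purely arithmetic check on the $p$-adic valuations of the binomials $\binom{s^c}{i}$ for $i\leq c$, and the essential point is that the extra factor of $s$ in the exponent $s^c$ compared to $s^{c-1}$ supplies exactly the single power of $s$ needed to push every central correction into $Z^s$. Keeping track of the nested commutators through the collection process (and ensuring that no lower-weight contributions contaminate the final central layer) is where the bulk of the technical work would lie.
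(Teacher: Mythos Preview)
The paper does not prove this lemma at all: it is simply quoted from Segal's \emph{Polycyclic groups} (Proposition~2, p.~113), so there is no ``paper's own proof'' to compare against beyond that reference.

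Your outline follows the standard route (and is essentially Segal's): induct on the class~$c$, pass to $G/\gamma_c(G)$ to obtain $w=v^s z$ with $z\in Z=\gamma_c(G)$, and then argue that $z$ is an $s$-th power in the abelian group~$Z$. You are explicit that the divisibility bookkeeping is where the real work lies, and that is accurate.

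There is, however, an internal inconsistency in your sketch that would prevent it from closing as written. You propose to merge consecutive $s$-th powers $a^s b^s$ via Hall--Petresco with exponent~$s$, producing corrections $\tau_i(a,b)^{-\binom{s}{i}}$; but $\binom{s}{i}$ is \emph{not} in general divisible by~$s$ (for instance $\binom{s}{s}=1$), so there is no reason for those corrections to land in $Z^s$. A few lines later you assert that the accumulated central contribution is a combination of $\binom{s^c}{i}$'s, which is a different (and correct) claim, but it does not follow from the merging step you described. The fix is to apply Hall--Petresco with exponent $n=s^c$ directly to the generators: from
\[
x^{s^c}y^{s^c}=(xy)^{s^c}\,c_2^{\binom{s^c}{2}}\cdots c_c^{\binom{s^c}{c}},\qquad c_i\in\gamma_i(\langle x,y\rangle),
\]
one uses the arithmetic fact that $s\mid\binom{s^c}{i}$ for $2\le i\le c$ (which follows from $i\binom{s^c}{i}=s^c\binom{s^c-1}{i-1}$ and $v_p(i)\le c-1$ for every prime $p\mid s$). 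Each correction $c_i^{\binom{s^c}{i}}$ is then visibly an $s$-th power of an element of $\gamma_i(G)$, and these can be absorbed by a further induction exploiting that the corrections lie in strictly deeper terms of the lower central series. Organised this way the argument goes through; organised as a merge of generic $s$-th powers it does not.
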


Also  we will need  the following two Lemmas:
\begin{lemma}\label{finite index}
Let $N$ be a finitely generated nilpotent group and assume  $H$ is a subgroup of $N$, such that 
$H[N,N]$ has finite index in $N$, then $H$ has finite index in $N$.
\end{lemma}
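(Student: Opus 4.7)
The plan is to prove this by induction on the nilpotency class $c$ of $N$. The base case $c=1$ is trivial, since then $[N,N]=1$ so $H[N,N]=H$ and the hypothesis is literally the conclusion. For the inductive step, assume the statement holds for all finitely generated nilpotent groups of class $<c$, and consider $\bar{N}=N/\gamma_c(N)$, which has class at most $c-1$. The image $\bar H$ of $H$ in $\bar N$ satisfies $\bar H[\bar N,\bar N]=H[N,N]\gamma_c(N)/\gamma_c(N)$, which still has finite index in $\bar N$. By the induction hypothesis, $\bar H$ has finite index in $\bar N$, equivalently $H\gamma_c(N)$ has finite index in $N$. Hence it suffices to show that $H$ has finite index in $H\gamma_c(N)$, i.e.\ that $H\cap\gamma_c(N)$ has finite index in $\gamma_c(N)$.

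For this final reduction I would exploit the fact that $\gamma_c(N)$ is a finitely generated abelian group (central in $N$, since $[\gamma_c(N),N]=\gamma_{c+1}(N)=1$), generated as an abelian group by the finitely many $c$-fold commutators $[a_{i_1},\ldots,a_{i_c}]$ in a finite generating set of $N$. Set $m=[N:H[N,N]]$ and observe that $H[N,N]$ is normal in $N$ (as $[N,N]$ is), so the finite quotient is abelian and $x^m\in H[N,N]$ for every $x\in N$; write $x^m=h(x)z(x)$ with $h(x)\in H$ and $z(x)\in [N,N]$. The key algebraic fact to invoke is that in a group of class at most $c$ the $c$-fold commutator map $N^c\to\gamma_c(N)$ is $\Z$-multilinear in each argument (corrections to multilinearity land in $\gamma_{c+1}(N)=1$) and vanishes whenever any single argument lies in $[N,N]=\gamma_2(N)$ (since substituting $\gamma_2$ for one slot lifts the total weight of the commutator to at least $c+1$, again landing in $\gamma_{c+1}(N)=1$).

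Combining these two properties gives, for each generator $g=[x_1,\ldots,x_c]$ of $\gamma_c(N)$,
\[
g^{m^c}=[x_1^m,\ldots,x_c^m]=[h(x_1)z(x_1),\ldots,h(x_c)z(x_c)]=[h(x_1),\ldots,h(x_c)]\in H\cap\gamma_c(N),
\]
because every multilinear expansion term containing some $z(x_i)\in[N,N]$ vanishes. Since $\gamma_c(N)$ is a finitely generated abelian group with a finite set of such generators whose $m^c$-th powers all lie in $H\cap\gamma_c(N)$, the subgroup $H\cap\gamma_c(N)$ contains $\gamma_c(N)^{m^c}$ and therefore has finite index in $\gamma_c(N)$. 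Together with the inductive conclusion that $H\gamma_c(N)$ has finite index in $N$, this finishes the proof.

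I expect the main obstacle to be the clean justification of the multilinearity step and the vanishing on $[N,N]$: both hinge on the specific weight calculus $[\gamma_i,\gamma_j]\subseteq\gamma_{i+j}$ together with the fact that we have truncated at class $c$. Once these two properties are in place, the induction and the commutator computation are essentially a formality; Lemma~\ref{segal} (Segal's power trick) is available as an alternative route to the same divisibility statement, but the multilinearity argument seems the most direct.
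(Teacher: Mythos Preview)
Your proof is correct and follows essentially the same inductive strategy as the paper: reduce modulo $\gamma_c(N)$, apply the inductive hypothesis to get $H\gamma_c(N)$ of finite index, and then show $H\cap\gamma_c(N)$ has finite index in $\gamma_c(N)$ via a commutator--power computation. The only variation is in this last step: the paper uses the index $p=[N:H\gamma_c(N)]$ and writes elements of $\gamma_c(N)$ as products of \emph{two}-fold commutators $[n_i,m_i]$ with $m_i\in\gamma_{c-1}(N)$, so that after raising to $p$-th powers the correction terms lie in the central subgroup $\gamma_c(N)$ and drop out immediately, whereas you go back to the original index $m=[N:H[N,N]]$ and invoke the full $c$-fold multilinearity and the vanishing on $\gamma_2(N)$---both arguments are standard and either one works.
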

\begin{proof}
We proceed by induction on the nilpotency class $c$ of $N$. For abelian groups, the lemma is trivially satisfied.
Now assume that $N$ is of class $c>1$ and the lemma is true for smaller nilpotency classes.
The assumption of our lemma, implies that 
\[ \frac{H [N,N]}{\gamma_c(N)}\mbox{ is of finite index in }\frac{N}{\gamma_c(N)}.\]
This can also be read as 
\[ \frac{H \gamma_c(N)}{\gamma_c(N)} \left[ \frac{N}{\gamma_c(N)},\frac{N}{\gamma_c(N)} \right]
\mbox{ is of finite index in }\frac{N}{\gamma_c(N)}.\]
By induction, we then obtain that 
\begin{equation}\label{by induction}
H\gamma_c(N) \mbox{ is of finite index in } N.
\end{equation}
In order to prove the lemma, it now suffices to show that $H\cap \gamma_c(N)$ has finite index in 
$\gamma_c(N)$. As $N$ is finitely generated, the group $\gamma_c(N)$ is also finitely generated (and abelian).
Suppose now on the contrary that the index of  $H\cap \gamma_c(N)$ in $\gamma_c(N)$ is infinite,
then there exists an element $\gamma$ of $\gamma_c(N)$ for which $\gamma^k\not \in H$ for all $k>0$.

Write $\gamma=[n_1,m_1] [n_2,m_2]\cdots [n_l,m_l]$ as a product of commutators, where each $n_i \in N$ and 
each $m_i\in \gamma_{c-1}(N)$. Note that for all integers $p,q$ we have that 
\[ [n_i,m_i]^{pq}=[n_i^p,m_i^q] . \]
Condition \eqref{by induction} implies that there is a positive integer $p$ such that for all $n\in N$ we have that 
$n^p\in H \gamma_c(N)$. So we can write $n_i^p=h_i z_i$ and $m_i^p = h'_i z'_i$ with  $h_i,h'_i\in H$ and 
$z_i,z'_i\in \gamma_c(N)$. But then 
\[ \gamma^{p^2}= [n_1^p,m_1^p] [n_2^p,m_2^p]\cdots [n_l^p,m_l^p]=
[h_1z_1,h_1'z_1'] \cdots [h_lz_l,h_l'z_l']  = [h_1,h_1'] \cdots [h_l,h_l']\in H  \]
which is a contradiction with the fact that we assumed that $\gamma^k \not \in H$ for all $k>0$.

\end{proof}

\begin{lemma}\label{free}   Let  $N_{n,c}$ be the  free $c$-step nilpotent group on $n$ generators.   The subgroup generated by a  subset $\{ x_1,...,x_n\}\subset N_{n,c}$ is 
also the free  $c$-step nilpotent group on $n$ generators if and only if the images  of the elements $x_i$ in the abelianization of $N_{n,c}$ generate a free abelian group of rank $n$.
\end{lemma}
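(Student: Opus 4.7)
The plan is to pass to the rational Mal'cev completion and work in the corresponding free nilpotent Lie algebra $\mathfrak{n}$ of class $c$ on $n$ generators $A_1,\ldots,A_n$, where the problem becomes a linear algebra statement. Write $X_i\in\mathfrak{n}$ for the Lie-algebra image of $x_i$. Since Mal'cev completion of abelian groups is simply tensoring with $\Q$, one has $N_{n,c}^{ab}\otimes\Q\cong\mathfrak{n}/[\mathfrak{n},\mathfrak{n}]\cong\Q^n$, so the $\bar x_i$ are $\Z$-linearly independent in $\Z^n$ if and only if the $\bar X_i$ are $\Q$-linearly independent in $\Q^n$.

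For the forward direction, assume $H=\langle x_1,\ldots,x_n\rangle$ is isomorphic to $N_{n,c}$ on the $x_i$. Then $H$ has the same Hirsch length as $N_{n,c}$, so their Mal'cev completions, viewed as Lie subgroups of $N_{n,c}^\Q$, have equal dimension, forcing $H^\Q=N_{n,c}^\Q$. At the Lie algebra level the subalgebra generated by the $X_i$ is all of $\mathfrak{n}$, so its image in the abelianization---which is precisely the $\Q$-span of the $\bar X_i$---equals all of $\Q^n$. Being $n$ spanning vectors, the $\bar X_i$ are linearly independent.

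For the reverse direction, assume the $\bar X_i$ are linearly independent. Then the Lie subalgebra $\mathfrak{h}\subseteq\mathfrak{n}$ generated by the $X_i$ satisfies $\mathfrak{h}+[\mathfrak{n},\mathfrak{n}]=\mathfrak{n}$, and a standard nilpotent-Nakayama induction (expanding a $k$-fold bracket after writing each factor as an $\mathfrak{h}$-part plus a $\gamma_k(\mathfrak{n})$-part shows $\gamma_k(\mathfrak{n})\subseteq\mathfrak{h}+\gamma_{k+1}(\mathfrak{n})$; iterate and use $\gamma_{c+1}(\mathfrak{n})=0$) yields $\mathfrak{h}=\mathfrak{n}$. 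By the universal property of $N_{n,c}$, the assignment $a_i\mapsto x_i$ defines an endomorphism $\phi:N_{n,c}\to N_{n,c}$ whose image is exactly $H$; its induced Lie algebra map is surjective onto $\mathfrak{h}=\mathfrak{n}$ and is therefore an isomorphism of finite-dimensional spaces of equal dimension. Consequently $\phi^\Q$ is an isomorphism of Mal'cev completions, $\phi$ is injective on the lattice $N_{n,c}\hookrightarrow N_{n,c}^\Q$, and $\phi:N_{n,c}\to H$ is an isomorphism, proving $H$ is free nilpotent of class $c$ on the $x_i$.

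The single step requiring most care is the nilpotent-Nakayama induction used to upgrade $\mathfrak{h}+[\mathfrak{n},\mathfrak{n}]=\mathfrak{n}$ to $\mathfrak{h}=\mathfrak{n}$, though this is standard. An alternative, entirely group-theoretic route to the reverse direction would combine Lemma \ref{finite index} (which gives that $H$ has finite index in $N_{n,c}$) with the fact that a surjection between torsion-free finitely generated nilpotent groups of the same Hirsch length has trivial kernel.
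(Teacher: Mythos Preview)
Your argument is correct. The route, however, is genuinely different from the paper's. The paper stays entirely on the group side: for the reverse implication it invokes Lemma~\ref{finite index} to conclude that $H=\langle x_1,\dots,x_n\rangle$ has finite index in $N_{n,c}$, hence the same Hirsch length; the canonical map $\phi:N_{n,c}\to H$ then has kernel of Hirsch length~$0$, and torsion-freeness of $N_{n,c}$ forces $\phi$ to be an isomorphism. (For the forward direction the paper simply says ``clear''.) Your main argument instead passes to the Mal'cev completion and the free nilpotent Lie algebra, proving surjectivity of $\phi_\ast$ via a Nakayama-type step and deducing injectivity of $\phi$ from that of $\phi^\Q$. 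This is heavier machinery but perfectly valid, and it has the virtue of making the forward direction equally transparent rather than leaving it as ``clear''. Amusingly, the alternative you sketch in your final sentence---combine Lemma~\ref{finite index} with a Hirsch-length count---is exactly the paper's proof.
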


\begin{proof}  Only if is clear. So we let $G$ denote the subgroup generated by the subset   $\{ x_1,...,x_n\}\subset N_{n,c}$ and we assume that 
the images of  the elements $x_i$ in $N_{n,c}/\gamma_2(N_{n,c})$  generate a free abelian group of rank $n$. This implies that the image  of $G$ in the abelianization of $N_{n,c}$
has finite index in this abelianization.   By 
Lemma \ref{finite index} it  also follows that    $G$ has finite index in $N_{n,c}$.    Therefore $G$ and $N_{n,c}$ have the same Hirsch length.
Now, let $y_1,y_2, \ldots, y_n$ denote a set of free generators of $N_{n,c}$ and consider the morphism $\varphi$ mapping $y_i$ to $x_i$, which exists beacuse $N_{n,c}$ is free nilpotent of class $c$ and $G$ is also nilpotent 
of class $\leq c$.
 The image of $\varphi$ is $G$ and the kernel of 
$\varphi$ must have Hirsch length 0 (since the Hirsch length of Im$(\varphi)$ + Hirsch length of Ker$(\varphi)$ has to equal the Hirsch length of $N_{n,c}$). As $N_{n,c}$ is torsion free, it follows that $\varphi$ is an isomorphism
from $N_{n,c}$ to $G$, which finishes the proof.
\end{proof}

In this section we are working with the fundamental group of a non orientable surface of genus $g$:\[ \Gamma_g=\langle a_1,\;a_2,\;\cdots,a_g\;|\; a_1^2 a_2^2 \cdots a_g^2 =1\rangle .\]

In the sequel of this section, the number $g-1$ will play a crucial role, therefore, we will
most of the time work with $\Gamma_{g+1}$.
For a given $c$ we will use $e_i$ to denote the natural projection of $a_i$ in the quotient 
$\Gamma_{g+1}/\gamma_{c+1}(\Gamma_{g+1})$.

It is not difficult to see that the abelianization of $\Gamma_{g+1}$ is isomorphic to $\Z^g\oplus \Z_2$ and so 
contains a free abelian group on $g$ generators as a subgroup of finite index. In the next theorem we show that 
something analogously holds for all nilpotent quotients. 

\begin{theorem}
The subgroup of $\Gamma_{g+1}/\gamma_{c+1}(\Gamma_{g+1})$ generated by $e_1,\;e_2,\; \ldots ,e_g$ $($the natural 
projections of $a_1$, $a_2$, \ldots, $a_g$$)$ is a free nilpotent group of class $c$ freely generated by the $g$
elements  $e_1,\;e_2,\; \ldots ,e_g$. Moreover, this subgroup is of finite index in $\Gamma_{g+1}/\gamma_{c+1}(\Gamma_{g+1}).$
\end{theorem}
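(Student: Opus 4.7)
I plan to establish the two assertions separately. The finite-index part follows almost directly from Lemma~\ref{finite index}: the abelianization of $N:=\Gamma_{g+1}/\gamma_{c+1}(\Gamma_{g+1})$ coincides with that of $\Gamma_{g+1}$, namely $\Z^{g+1}/\langle 2(\bar a_1+\cdots+\bar a_{g+1})\rangle\cong\Z^g\oplus\Z_2$, and within it the images of $e_1,\ldots,e_g$ generate a copy of $\Z^g$ of index $2$. Hence $H\cdot[N,N]$ has finite index in $N$, and Lemma~\ref{finite index} upgrades this to $[N:H]<\infty$.

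For the free-nilpotent part, my plan is to pull everything back to $\widetilde N:=N_{g+1,c}=F_{g+1}/\gamma_{c+1}(F_{g+1})$ via the natural surjection $q:\widetilde N\twoheadrightarrow N$, whose kernel is the normal closure of $\bar w:=a_1^2a_2^2\cdots a_{g+1}^2$. Inside $\widetilde N$ the subgroup $\widetilde H$ generated by $a_1,\ldots,a_g$ is manifestly free nilpotent of class $c$ on $g$ generators, since the ``drop $a_{g+1}$'' retraction $\widetilde N\to N_{g,c}$ restricts to an isomorphism on $\widetilde H$. The map $q$ carries $\widetilde H$ onto $H$, so it suffices to prove that $q|_{\widetilde H}$ is injective, i.e.\ that $\widetilde H\cap\ker q=\{1\}$.

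To secure this triviality I would build an auxiliary homomorphism $\rho:\widetilde N\to G$ into some nilpotent group $G$ of class $\leq c$ that annihilates $\bar w$ and is injective on $\widetilde H$. The naive attempt of taking $G=N_{g,c}$ and sending $a_{g+1}$ to a square root of $(a_1^2\cdots a_g^2)^{-1}$ fails because no such square root exists in $N_{g,c}$ in general, and this is the one real technical obstacle. The way around it is to pass to the rational Malcev completion $G:=(N_{g,c})^{\Q}$, which is uniquely divisible and still of nilpotency class $\leq c$; there $(a_1^2\cdots a_g^2)^{-1}$ does have a unique square root $\alpha$. Defining $\rho(a_i)=a_i$ for $i\leq g$ and $\rho(a_{g+1})=\alpha$ then yields a homomorphism $F_{g+1}\to G$ which descends to $\widetilde N$ by nilpotency of $G$, satisfies $\rho(\bar w)=a_1^2\cdots a_g^2\cdot\alpha^2=1$, and restricts on $\widetilde H$ to the canonical injection $N_{g,c}\hookrightarrow(N_{g,c})^{\Q}$. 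Therefore $\ker q\subseteq\ker\rho$ while $\widetilde H\cap\ker\rho=\{1\}$, which forces $\widetilde H\cap\ker q=\{1\}$ as required.
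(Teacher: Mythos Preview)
Your proof is correct and, for the finite-index assertion, essentially identical to the paper's (both appeal to Lemma~\ref{finite index} after noting the image in the abelianization has index~$2$).

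For the free-nilpotent assertion the overall strategy is the same --- build a homomorphism from $\Gamma_{g+1}/\gamma_{c+1}(\Gamma_{g+1})$ (equivalently, from $N_{g+1,c}$ killing $\bar w$) into a $c$-step nilpotent target that is injective on $\langle e_1,\ldots,e_g\rangle$ --- but the execution differs. The paper targets $N_{g,c}$ itself: it invokes Lemma~\ref{segal} (Segal's power lemma) to produce $d\in N_{g,c}$ with $d^2=b_1^{2^c}\cdots b_g^{2^c}$, sends $a_i\mapsto b_i^{2^{c-1}}$ and $a_{g+1}\mapsto d^{-1}$, and then needs Lemma~\ref{free} to identify the image $\langle b_1^{2^{c-1}},\ldots,b_g^{2^{c-1}}\rangle$ as free $c$-step nilpotent before exhibiting an explicit inverse. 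You instead pass to the Malcev completion $(N_{g,c})^{\Q}$, where unique divisibility supplies the square root for free, so no scaling by $2^{c-1}$ is needed and the restriction to $\widetilde H$ is simply the canonical embedding $N_{g,c}\hookrightarrow (N_{g,c})^{\Q}$. Your route bypasses Lemmas~\ref{segal} and~\ref{free} entirely at the cost of invoking the Malcev completion; the paper's route stays inside discrete finitely generated nilpotent groups at the cost of the two auxiliary lemmas and the $2^{c-1}$ bookkeeping.
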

\begin{proof}
Let $N_{g,c}$ be the free $c$-step nilpotent group freely generated by $b_1$, $b_2$, \ldots, $b_g$. By 
Lemma~\ref{segal}, we know that $b_1^{2^c}b_2^{2^c}\cdots b_g^{2^c}=d^2$ for some $d\in N_{g,c}$.

Now, let $\varphi:\Gamma_{g+1}\rightarrow N_{g,c}$ be the morphism determined by 
\[ \varphi(a_1) = b_1^{2^{c-1}},\; \varphi(a_2) = b_2^{2^{c-1}},\cdots, \varphi(a_g)=b_g^{2^{c-1}},\;
\varphi(a_{g+1})=d^{-1}.\]
As $N_{g,c}$ is $c$-step nilpotent this morphism factors through the $c$-step nilpotent quotient of $\Gamma_{g+1}$ which 
leads to a morphism $\psi:\Gamma_{g+1}/\gamma_{c+1}(\Gamma_{g+1})\rightarrow N_{g,c}$ with 
\[ \psi(e_1) = b_1^{2^{c-1}},\; \psi(e_2) = b_2^{2^{c-1}},\cdots, \psi(e_g)=b_g^{2^{c-1}},\;
\psi(e_{g+1})=d^{-1}.\]

Let $\mu: \langle e_1,\;e_2,\;\cdots ,\; e_g \rangle \rightarrow N_{g,c}$ be the restriction of $\psi$.
Then, the image of $\mu$ equals the subgroup $\langle b_1^{2^{c-1}},\; b_2^{2^{c-1}},\cdots, b_g^{2^{c-1}} \rangle$ 
of $N_{g,c}$ which is also free $c$-step nilpotent on $g$ generators by Lemma \ref{free}.
Therefore, there also exists a morphism $\nu: \langle b_1^{2^{c-1}},\; b_2^{2^{c-1}},\cdots, b_g^{2^{c-1}} \rangle \rightarrow  \langle e_1,\;e_2,\;\cdots ,\; e_g \rangle$
mapping $b_i^{2^{c-1}}$ to $e_i$. It is obvious that $\mu$ and $\nu$ are each others inverse and so $\mu$ is an isomorphism, showing that 
%
$\langle e_1,\;e_2,\;\cdots ,\; e_g \rangle$ is free $c$-step nilpotent.

\medskip


\medskip

Since $\langle e_1,\;e_2,\;\cdots ,\; e_g \rangle$ is a subgroup of $\Gamma_{g+1}/\gamma_{c+1}(\Gamma_{g+1})$
satisfying the conditions of Lemma~\ref{finite index}, we find that  
$\langle e_1,\;e_2,\;\cdots ,\; e_g \rangle$ is of finite index in $\Gamma_{g+1}/\gamma_{c+1}(\Gamma_{g+1})$.

\end{proof}
\begin{corollary}\label{hasR}
If $c\geq 2g$, then $\Gamma_{g+1}/\gamma_{c+1}(\Gamma_{g+1})$ has property $R_\infty$.
\end{corollary}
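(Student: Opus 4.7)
The plan is to combine the theorem just proved (which exhibits a copy of the free $c$-step nilpotent group of rank $g$ as a finite-index subgroup of $G := \Gamma_{g+1}/\gamma_{c+1}(\Gamma_{g+1})$) with Lemma~\ref{eigenvalues} and Lemma~\ref{eigenvaluecriterium}. Morally the corollary says that enough of the $R_\infty$ argument for $N_{g,c}$ with $c\geq 2g$ (Theorem A) survives passage from the free nilpotent group to its finite-index overgroup $G$, and the two lemmas are exactly the tools that make this precise.

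First I would pick an arbitrary automorphism $\varphi$ of $G$ and look at its action on the mod-torsion abelianization. Since the abelianization of $\Gamma_{g+1}$ is $\Z^g\oplus\Z_2$, the induced map $\varphi_1$ acts as an automorphism of $\Z^g$ with $g$ complex eigenvalues $\lambda_1,\ldots,\lambda_g$ whose product is $\pm 1$ (because $\det(\varphi_1)=\pm 1$).

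Next, the finite-index free nilpotent subgroup supplied by the theorem allows Lemma~\ref{eigenvalues} to be invoked with $r=g$: for each $i$ with $2\leq i\leq c$, every $i$-fold product $\lambda_{j_1}\cdots\lambda_{j_i}$ in which not all indices coincide appears as an eigenvalue of $\varphi_i$. The remaining (essentially combinatorial) task is to exhibit the value $1$ as such a product using at most $c$ factors. When $\lambda_1\lambda_2\cdots\lambda_g=1$, the $g$-fold product $\lambda_1\lambda_2\cdots\lambda_g$ itself does the job; when $\lambda_1\lambda_2\cdots\lambda_g=-1$, the $2g$-fold product $\lambda_1^2\lambda_2^2\cdots\lambda_g^2=1$ works. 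In either case the assumption $g\geq 2$ guarantees the \emph{not all equal} clause, so some $\varphi_i$ with $i\leq 2g\leq c$ has $1$ as an eigenvalue, and Lemma~\ref{eigenvaluecriterium} yields $R(\varphi)=\infty$.

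I do not expect a real obstacle here: the substantive work has already been absorbed into the preceding theorem (which produces the free nilpotent subgroup) and into the two lemmas. The only point that deserves explicit attention is the \emph{not all indices equal} clause in Lemma~\ref{eigenvalues}, which forces $g\geq 2$; this is exactly the range in which $\Gamma_{g+1}$ corresponds to a surface of negative Euler characteristic, consistent with the Klein bottle case being handled separately in the introduction.
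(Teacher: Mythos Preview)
Your proposal is correct and follows essentially the same approach as the paper: invoke Lemma~\ref{eigenvalues} (applicable thanks to the finite-index free nilpotent subgroup just constructed), observe that the product of the eigenvalues of $\varphi_1$ is $\pm 1$, and then exhibit $1$ as an admissible product of at most $2g$ eigenvalues, concluding via Lemma~\ref{eigenvaluecriterium}. The only cosmetic difference is that the paper skips your case split and simply notes $(\lambda_1\cdots\lambda_g)^2=(\pm 1)^2=1$ is an eigenvalue of $\varphi_{2g}$; your explicit attention to the ``not all indices equal'' clause (and hence the implicit hypothesis $g\geq 2$) is a nice touch that the paper leaves unstated.
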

\begin{proof}
Consider any automorphism $\varphi$ of $G$ and let $\lambda_1,\lambda_2,\ldots,\lambda_g$ denote the eigenvalues of $\varphi_1$. By Lemma~\ref{eigenvalues} we know that 
$(\lambda_1\lambda_2\cdots \lambda_g)^2=(\pm 1)^2 =1 $ is an eigenvalue of $\varphi_{2g}$ and hence, 
by Lemma~\ref{eigenvaluecriterium} we can conclude that $R(\varphi)=\infty$.
\end{proof}

The following lemma is a technical one which we will need for the construction of certain automorphisms of  $\Gamma_{g+1}$.

\begin{lemma} \label{fnc} There exists a function $f : \N\times \N \rightarrow \N$
such that for any nilpotent group $G$ of class $\leq c$ and any pair of elements 
$x,y\in G$ we have that 
\[ \exists z \in \langle y, \; \gamma_2(G)\rangle \mbox{ with }
x^n y^{f(n,c)} = (xz)^n. \] 
\end{lemma}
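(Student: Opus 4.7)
We argue by induction on the nilpotency class $c$, reducing the general case to the universal setting of the free class-$c$ nilpotent group on two generators.

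\smallskip

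For $c=1$, every group of class $\leq 1$ is abelian, so $(xy)^n=x^ny^n$ and we may take $f(n,1)=n$, $z=y$.

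\smallskip

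For $c\geq 2$, assume $f(n,c-1)$ has been constructed. Given $G$ of class $\leq c$ with $x,y\in G$, pass to $\bar G=G/\gamma_c(G)$ (of class $\leq c-1$). The inductive hypothesis produces $\bar z\in\langle \bar y,\gamma_2(\bar G)\rangle$ with $\bar x^n\bar y^{f(n,c-1)}=(\bar x\bar z)^n$, and lifting $\bar z$ to $z_0\in\langle y,\gamma_2(G)\rangle$ gives
\[
(xz_0)^n = x^n y^{f(n,c-1)}\,w, \qquad w\in\gamma_c(G).
\]
Since $\gamma_c(G)$ is central, the remaining task is to find $\delta\in\langle y,\gamma_2(G)\rangle$ and an exponent $f(n,c)$ (depending only on $n$ and $c$) such that $(xz_0\delta)^n=x^ny^{f(n,c)}$, absorbing $w$ into a larger power of $y$. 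The obstacle is that $w$ depends on the specific $G,x,y$, while $f(n,c)$ must be universal.

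\smallskip

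To make the construction uniform, reduce to the universal case: the free class-$c$ nilpotent group $F_{2,c}=\langle a,b\rangle$ on two generators. For any $G$ of class $\leq c$ with chosen $x,y$, there is a unique homomorphism $F_{2,c}\to G$ sending $a\mapsto x$, $b\mapsto y$, which maps $\gamma_i(F_{2,c})$ into $\gamma_i(G)$; hence any identity $(az)^n=a^n b^{f(n,c)}$ with $z\in\langle b,\gamma_2(F_{2,c})\rangle$ established in $F_{2,c}$ transfers to an identity of the required form in $G$. Thus it suffices to construct $f(n,c)$ and a suitable $z$ in $F_{2,c}$.

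\smallskip

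In $F_{2,c}$, we pass to the real Mal'cev completion $F_{2,c}^{\R}$, whose Lie algebra $\mathfrak{f}$ is the free class-$c$ nilpotent Lie algebra on $A=\log a$ and $B=\log b$. The equation $(az)^n=a^nb^m$ is equivalent, via the Baker--Campbell--Hausdorff formula, to $n\log(az)=\log(a^nb^m)$ in $\mathfrak{f}$. The subalgebra corresponding to the closure of $\langle b,\gamma_2(F_{2,c})\rangle$ is $\R B + [\mathfrak{f},\mathfrak{f}]$, so we write $Z=\log z = \alpha B + W$ with $\alpha\in\R$ and $W\in[\mathfrak{f},\mathfrak{f}]$. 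Expanding both sides in a Hall basis of $\mathfrak{f}$ and comparing coefficients level by level (ordered by commutator weight) yields a triangular system whose unique solution determines $\alpha$ and the coefficients of $W$ as rational functions of $n$ and $m$; at weight one, $\alpha=m/n$, and each successive level fixes the new coefficients of $W$ from the previous ones. Choosing $m=f(n,c)$ a sufficiently divisible positive integer clears all denominators appearing in this finite recursion, so that $z=\exp(Z)$ has integer Mal'cev coordinates in a Mal'cev basis of $\langle b,\gamma_2(F_{2,c})\rangle$; in particular $z$ lies in that subgroup, and the desired equality holds.

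\smallskip

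The main obstacle is verifying that the required divisibility of $m$ can be bounded purely as a function of $n$ and $c$. This follows from universality: both the Hall basis of $\mathfrak{f}$ (which is finite) and the BCH coefficients occurring in the level-by-level recursion depend only on the class $c$, so the product of the denominators arising is a well-defined function of $n$ and $c$ alone.
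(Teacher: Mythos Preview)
Your reduction to the free class-$c$ nilpotent group on two generators and the triangular-solve-then-clear-denominators strategy is exactly the paper's approach; the opening induction on $c$ is superfluous and you rightly abandon it.

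One point deserves tightening. You assert that taking $m$ ``sufficiently divisible'' forces $z$ into the lattice $F_{2,c}$, but for a polynomial $p(m)$ with rational coefficients it is not automatic that $p(m)\in\Z$ once $m$ is divisible by the lcm of the denominators --- one also needs the constant term to be integral. The paper makes this explicit: working directly with a Mal'cev basis $a_1=a,\,a_2=b,\,a_3,\ldots,a_k$ of $N_{2,c}$ and the polynomial formulas for $n$-th powers, the equation $a^n b^m=(a\,a_2^{z_2}\cdots a_k^{z_k})^n$ yields a triangular system whose solutions $z_i=p_i(m)$ are polynomials in $m$ with \emph{no constant term}, since $m=0$ forces all $z_i=0$. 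Then taking $m$ equal to the lcm of the denominators of the $p_i$ gives $p_i(m)\in\Z$, because every monomial $c_j m^j$ with $j\ge 1$ becomes integral. Your Lie-algebra route needs the same vanishing-at-$m=0$ observation, and should also say why integrality of the Hall-basis coefficients of $Z=\log z$ yields $z\in F_{2,c}$ rather than merely $z\in F_{2,c}^{\R}$ (the $\exp$ map brings in further BCH denominators). Working at the group level with Mal'cev coordinates, as the paper does, avoids this second issue entirely.
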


\begin{proof} Fix $c,n\in \N$ and 
let $N_{2,c}$ be the free nilpotent group of class $c$ on two generators, say $a$ and $b$.
It is enough to show that there exists a value $f(n,c)$ and an element $d\in \langle b , \gamma_2(N_{2,c})\rangle$ such that 
\[ a^n b^{f(n,c)} = (a d)^n.\]
The case for a general group then follows by considering the homomorphism 
$\psi:N_{2,c}\rightarrow G$ determined by $\psi(a)=x$ and $\psi(b)=y$ and by taking $z=\psi(d)$.

\medskip

Let 
$a_1=a,\;a_2=b,\; a_3=[a,b],\;a_4, \; \ldots, \;a_k$
be a Malcev basis for $N_{2,c}$.

It is known that there are polynomials
$q_i(x_1,\ldots,x_{i-1},m)$ with coefficients in $\Q$ 
such that for any $x_1,x_2,\ldots, x_k,
m\in \Z$ we
have that 

\[
(a_1^{x_1}a_2^{x_2} \ldots a_k^{x_k})^m
 =  
a_1^{m x_1} a_2^{m x_2 + q_2(x_1,m)}\ldots  
a_i^{m x_i + q_i (x_1,\ldots,x_{i-1}, m)} \ldots a_k^{m x_k + q_n(x_1,x_2,\ldots,x_{k-1},m)}.
\]
(In fact  
$q_2=0$). Now, for any $m$ we consider the equation in the unknowns $z_2,z_3,z_4,\ldots,z_k$:
\[ a^n b^m = (a_1 a_2^{z_2} a_3^{z_3}\ldots a_k^{z_k})^n.\]
This equation is equivalent to a set of equations:
\[ \left\{
\begin{array}{l}
z_2= \frac{1}{n} m\\
0=z_3 +  \frac{1}{n} q_3(1,z_2,n)\\
0=z_4 + \frac{1}{n}q_4(1,z_2,z_3,n)\\
\;\;\;\;\vdots\\
0=z_k+\frac{1}{n}q_k(1,z_2,\ldots, z_{k-1}).
\end{array}\right.\]
Over $\Q$ this system of equations can be solved in a recursive way and we see that there are 
polynomials $p_i(x)$ in one variable such that the solution of the above equation is given 
by $z_i = p_i(m)$. Now these polynomials do not have a constant term (because if $m=0$, then 
all $z_i=0$). Hence, if we take $m$ equal to the least common multiple of all denominators
 appearing in the polynomials $p_i$, then all the $z_i=p_i(m)$ will lie in $\Z$. So by taking $f(n,c)=m$ for this $m$, the lemma is proved, since in this case we have that  
\[ d=a_2^{z_2} a_3^{z_3}\ldots a_k^{z_k}\in \langle b, \gamma_2(N_{2,c})\rangle.\]
\end{proof}

\begin{theorem}\label{map varphi}
Let $c>0$ and $g>1$ be fixed.
For any  integer $k\in \Z$, there exists an automorphism
$\varphi_k$ of  $\Gamma_{g+1}/\gamma_{c+1}(\Gamma_{g+1})$ with 
\[ \varphi_k(e_1) = e_2,\; 
\varphi_k(e_2) =e_3,\; \varphi_k(e_3) = e_4 ,\;\ldots, \varphi_k(e_{g-1})=e_{g} \mbox{ and }
\varphi_k(e_{g})=e_1 e_{g}^{m},\]
with $m=(k f(2,c))^c$. Here $f(2,c)$ is the function defined in Lemma~\ref{fnc}.
\end{theorem}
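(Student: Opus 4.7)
The plan is to define $\varphi_k$ by specifying its values on the generators $e_1,\ldots,e_{g+1}$ of $G:=\Gamma_{g+1}/\gamma_{c+1}(\Gamma_{g+1})$, verify that the single defining relation $e_1^2 e_2^2\cdots e_{g+1}^2=1$ is respected, and then invoke Lemma~\ref{hopfian} to upgrade the resulting endomorphism to an automorphism. Since the images of $e_1,\ldots,e_g$ are prescribed, the only freedom is the choice of $W:=\varphi_k(e_{g+1})$, and the well-definedness condition becomes
\[
e_2^2\, e_3^2 \cdots e_g^2\,(e_1 e_g^m)^2\, W^2 = 1 \quad \text{in } G.
\]

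First I would locate a natural candidate for $W$ by passing to the abelianization $G^{\mathrm{ab}}\cong \Z^g\oplus \Z_2$. Using the abelian relation $2(e_1+\cdots+e_{g+1})=0$, the above constraint forces $W\equiv e_{g+1}-me_g$ modulo $[G,G]$, so the first approximation is $W_0:=e_{g+1}\, e_g^{-m}$. At higher levels of the lower central series the square of $W_0$ will differ from the required value by a correction living in $\gamma_2(G)$; I would then look for $W$ of the form $W_0\, u$ with $u\in\gamma_2(G)$ chosen to absorb this discrepancy stage by stage down the central series of $G$.

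The main obstacle is producing the correction $u$, and this is where the specific exponent $m=(kf(2,c))^c$ is essential. At each step $\gamma_i(G)/\gamma_{i+1}(G)$ (for $i=2,\ldots,c$), the correction equation amounts to writing a prescribed element as a square modulo the next stage of the series. Lemma~\ref{fnc} supplies the tool: the identity $x^2 y^{f(2,c)}=(xz)^2$ says that multiplying a square by an $f(2,c)$-th power of any element again yields a square, with an explicit modification of the square root. Iterating this identity $c$ times consumes one factor of $f(2,c)$ per layer, and the factor $f(2,c)^c$ inside $m$ supplies exactly that divisibility; the free parameter $k$ parametrizes the resulting infinite family of automorphisms. (Lemma~\ref{segal} may enter as a companion tool for handling power operations in the nilpotent setting.)

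Finally, once $\varphi_k$ has been seen to be a well-defined endomorphism, Lemma~\ref{hopfian} reduces the automorphism claim to surjectivity on the abelianization. The induced map on the free part of $G^{\mathrm{ab}}$ is represented by the matrix that cyclically shifts $e_1,\ldots,e_{g-1}$ and sends $e_g$ to $e_1+me_g$; a cofactor expansion along the first row shows this matrix has determinant $\pm 1$, so the induced map on the free part is an automorphism, while the effect on the $\Z_2$-torsion summand is determined consistently by the relation. Surjectivity on $G^{\mathrm{ab}}$ then yields, via Lemma~\ref{hopfian}, that $\varphi_k$ is an automorphism of $G$.
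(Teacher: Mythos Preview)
Your overall framework matches the paper: define $\varphi_k$ on the generators, verify that the relator $e_1^2\cdots e_{g+1}^2$ is sent to $1$ by choosing the image $W$ of $e_{g+1}$ appropriately, and then invoke Lemma~\ref{hopfian} via surjectivity on the abelianization. That skeleton is exactly right.

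However, your account of the key step --- how the specific exponent $m=(kf(2,c))^c$ is used to produce $W$ --- is off. You describe an inductive procedure, applying Lemma~\ref{fnc} once per layer of the lower central series and ``consuming one factor of $f(2,c)$ per layer''. But Lemma~\ref{fnc} is not a one-step-at-a-time statement: it already works globally in any group of class $\leq c$, and there is no mechanism in your sketch explaining why the correction at layer $i$ would be an $f(2,c)$-th power. The paper instead proceeds in two clean moves. First, one rewrites
\[
e_1^{-2}(e_1 e_g^{m})^2=(e_1^{-1}e_g^{m}e_1)\,e_g^{m},
\]
which is visibly a product of $m$-th powers; since $m=(kf(2,c))^c$ and $G$ has class $c$, Lemma~\ref{segal} (applied once, with $s=kf(2,c)$) gives an $\omega$ with $e_1^{-2}(e_1 e_g^{m})^2=\omega^{\,kf(2,c)}$. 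Second, a \emph{single} application of Lemma~\ref{fnc} with $x=e_1^{-2}e_{g+1}^{-1}e_1^{2}$ and $y=\omega^{k}$ turns $x^2\omega^{\,kf(2,c)}$ into a genuine square $(x\beta)^2$. Setting $W=(x\beta)^{-1}$ then makes the relator computation go through directly. So the exponent $c$ in $m$ is consumed entirely by Segal's lemma, and the factor $f(2,c)$ is used exactly once by Lemma~\ref{fnc}; there is no iteration down the central series.
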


\begin{proof}
Note that 
\[ e_1^{-2}(e_1 e_g^m)^2 = (e_1^{-1} e_g^m e_1) e_g^m\]
which is a product of $m$-th powers in  $\langle e_1,\ldots, e_g\rangle$. Since $m=(k f(2,c))^c$, 
Lemma~\ref{segal} tells us that there is an element $\omega\in \langle e_1,\ldots, e_g\rangle$ such that 
\[  e_1^{-2}(e_1 e_g^m)^2 = \omega^{k f(2,c) }.\]
By Lemma~\ref{fnc}, we know that there is an element 
$\beta \in \langle e_1,\ldots, e_g, \gamma_2(G)\rangle$ with 
\[ (e_1^{-2} e_{g+1}^{-1} e_1^2)^2 \omega^{k f(2,c)} = ( e_1^{-2} e_{g+1}^{-1} e_1^2 \beta )^2.\]

\medskip

Let $\alpha=e_1^{-2} e_{g+1}^{-1} e_1^2 \beta$. 

Consider $F_{g+1}$ the free group on $g+1$ generators,
say $a_1,a_2,\ldots, a_{g+1}$, and define a morphism 
$\psi_k:F_{g+1} \to \Gamma_{g+1}/\gamma_{c+1}(\Gamma_{g+1})$ 
which is determined by 
\[ \psi_k(a_1)= e_2,\; \psi_k(a_2) =e_3,\;\ldots,\; \psi_k(a_{g-1})=e_g,\;
\;\psi_k (a_g)=e_1 e_g^m,\;
\psi_k(a_{g+1}) =\alpha^{-1}.\]

We now compute that 
\begin{eqnarray*}
\psi_k(a_1^2a_2^2a_3^2\cdots a_{g+1}^2) 
& = &   e_2^2e_3^2\ldots e_g^2 (e_1 e_g^{m})^2 \alpha^{-2} \\
& = & e_1^{-2} ( e_1^2e_2^2\ldots e_g^2) e_1^{2} \left( e_1^{-2} (e_1 e_g^{m})^2\right) \alpha^{-2}\\
& = & (e_1^{-2} e_{g+1}^{-1} e_1^2)^2 \omega^{k f(2,c)} \alpha^{-2}\\
& = & \left( e_1^{-2} e_{g+1}^{-1} e_1^2 \beta \right)^2 \alpha^{-2}\\
& = & 1.
\end{eqnarray*}
Since $\psi_k$ maps the defining relator for $\Gamma_{g+1}$ to 1, $\psi_k$ determines a morphism from $\Gamma_{g+1} \to \Gamma_{g+1}/\gamma_{c+1}(\Gamma_{g+1})$ and 
hence also a morphism 
$\varphi_k: \Gamma_{g+1}/\gamma_{c+1}(\Gamma_{g+1}) \to 
\Gamma_{g+1}/\gamma_{c+1}(\Gamma_{g+1})$.

To see that $\varphi_k$ is an automorphism, it is enough, by Lemma~\ref{hopfian}, that the induced map to the abelianization is surjective. But this is now trivial to see, by the form 
of the images of the generators. 

\end{proof}

\begin{theorem}\label{existmap}
For any $c>0$ and any $g>1$ there exists an automorphism $\varphi$ of 
$\Gamma_{g+1}/\gamma_{c+1}(\Gamma_{g+1})$ such that the induced automorphism 
$\varphi_1$ of the abelianization of $\Gamma_{g+1}$ has determinant $-1$, has one real eigenvalue of 
modulus $>1$ $($and multiplicity 1$)$ and all other eigenvalues are of modulus $<1$.
\end{theorem}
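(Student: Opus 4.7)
The plan is to use the automorphism $\varphi_k$ provided by Theorem~\ref{map varphi} (modified appropriately when $g$ is odd) and to read off the spectral and determinantal information of $\varphi_1$ directly from its matrix on the free part of the abelianization. In the basis $\bar e_1,\ldots,\bar e_g$ of the free part $\Z^g$ of $\Gamma_{g+1}^{\mathrm{ab}}=\Z^g\oplus\Z_2$, the induced map $\varphi_1$ is represented by the companion-type matrix
\[
A_m=\begin{pmatrix} 0 & 0 & \cdots & 0 & 1 \\ 1 & 0 & \cdots & 0 & 0 \\ 0 & 1 & \cdots & 0 & 0 \\ \vdots & & \ddots & & \vdots \\ 0 & 0 & \cdots & 1 & m \end{pmatrix},\qquad m=(kf(2,c))^c,
\]
whose characteristic polynomial is $p_m(\lambda)=\lambda^g-m\lambda^{g-1}-1$ and whose determinant is $(-1)^{g+1}$.

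For $m$ large (equivalently, $k$ large), a Rouch\'e-type argument shows that $p_m$ has one simple real root $\lambda^\ast\sim m$, hence of modulus strictly greater than one, while the remaining $g-1$ roots cluster near the $(g-1)$-th roots of $-1/m$ and therefore have modulus of order $m^{-1/(g-1)}<1$. These roots are pairwise distinct for $m$ large, so the spectral half of the conclusion holds.

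When $g$ is even, $\det A_m=-1$ and $\varphi_k$ is already the required automorphism. When $g$ is odd, I would run a parallel construction yielding an automorphism $\varphi_k'$ whose values on $e_1,\ldots,e_{g-1}$ agree with those of $\varphi_k$ but with $\varphi_k'(e_g)=e_1^{-1}e_g^m$ instead of $e_1 e_g^m$; on the abelianization this flips the $(1,g)$ entry of $A_m$ and changes the determinant to $(-1)^g=-1$, while the new characteristic polynomial $\lambda^g-m\lambda^{g-1}+1$ enjoys the same asymptotic root structure (one large real root, $g-1$ small ones).

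The main obstacle is verifying that $\varphi_k'$ actually descends to an automorphism of $\Gamma_{g+1}/\gamma_{c+1}(\Gamma_{g+1})$. One must rewrite $e_2^2 e_3^2\cdots e_g^2(e_1^{-1}e_g^m)^2$ as a square $\alpha^2$, so that choosing $\psi_k'(a_{g+1})=\alpha^{-1}$ kills the defining relator $a_1^2 a_2^2\cdots a_{g+1}^2$. Using $e_2^2\cdots e_g^2=e_1^{-2}e_{g+1}^{-2}$ (from the defining relation) together with the conjugation identity $x^{-n}y^2 x^n=(x^{-n}yx^n)^2$, one reduces, as in Theorem~\ref{map varphi}, to expressing a residual element as a $(kf(2,c))$-th power via Lemma~\ref{segal} and then absorbing the remainder via Lemma~\ref{fnc}. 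Once $\psi_k'$ kills the relator, it induces a morphism on $\Gamma_{g+1}/\gamma_{c+1}(\Gamma_{g+1})$; surjectivity on the abelianization follows from $|\det A_m'|=1$, and Lemma~\ref{hopfian} promotes $\varphi_k'$ to an automorphism.
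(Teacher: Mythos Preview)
Your approach diverges from the paper's in a substantial way. The paper never attempts to modify $\varphi_k$; instead it composes two automorphisms that are \emph{already known to exist}: the cyclic shift $\lambda$ of Lemma~\ref{map lambda} and the $\varphi_k$ of Theorem~\ref{map varphi}. The induced matrix of $\varphi=\lambda\circ\varphi_k^{\,g-1}$ is $LA^{g-1}$, and since $\det L=(-1)^g$ while $\det A=(-1)^{g+1}$ one gets $\det(LA^{g-1})=-1$ regardless of the parity of $g$. The eigenvalue statement is then extracted from an explicit computation of the characteristic polynomial of $LA^{g-1}$ (Lemma~\ref{matrices}). Your treatment of the even case---using $\varphi_k$ itself, whose companion matrix has characteristic polynomial $\lambda^g-m\lambda^{g-1}-1$---is correct and pleasantly shorter than the paper's route in that parity.

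The odd case, however, has a genuine gap. You assert that one can ``run a parallel construction'' with $\varphi_k'(e_g)=e_1^{-1}e_g^{m}$, but the relator check does not reduce to the pattern of Theorem~\ref{map varphi}. Following the same manipulations one gets
\[
e_2^2\cdots e_g^2\,(e_1^{-1}e_g^{m})^2
=(e_1^{-2}e_{g+1}^{-1}e_1^{2})^2\cdot e_1^{-2}(e_1^{-1}e_g^{m})^2
=(e_1^{-2}e_{g+1}^{-1}e_1^{2})^2\cdot e_1^{-4}\cdot (e_1e_ge_1^{-1})^{m}e_g^{m}.
\]
The tail $(e_1e_ge_1^{-1})^{m}e_g^{m}$ is a product of $m$-th powers and, via Lemma~\ref{segal}, becomes an $(kf(2,c))$-th power; but the stray factor $e_1^{-4}$ is not, and Lemma~\ref{fnc} only lets you absorb an $f(2,c)$-th power into a preceding square. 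So you cannot conclude that the displayed element is a square, and hence you have no candidate for $\psi_k'(a_{g+1})$. (Recall from the Heisenberg group that a product of two squares in a nilpotent group need not be a square, so splitting the expression as two squares does not help.) Without this, $\varphi_k'$ is not shown to descend to $\Gamma_{g+1}/\gamma_{c+1}(\Gamma_{g+1})$, and the odd-$g$ half of your argument is incomplete. The paper sidesteps the whole issue by taking $\varphi$ to be a composite of automorphisms whose existence has already been secured.
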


Before proving this theorem, we need some more technical results.

The first lemma is a trivial observation.
\begin{lemma}\label{map lambda} 
For any $g>1$, the group $\Gamma_g$ admits the automorphism 
$\lambda$ which is determined by 
$\lambda(a_1) = a_2$, $\lambda(a_2)= a_3$, \ldots , $\lambda(a_{g-1})=a_g$ and
$\lambda(a_g)=a_1$. 

\end{lemma}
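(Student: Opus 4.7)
The plan is to invoke the universal property of the presentation of $\Gamma_g$ to show that the cyclic permutation of generators extends to a well-defined endomorphism, and then produce an explicit two-sided inverse.

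First I would verify that the assignment $a_1\mapsto a_2,\;a_2\mapsto a_3,\;\ldots,\;a_{g-1}\mapsto a_g,\;a_g\mapsto a_1$ is compatible with the single defining relation $a_1^2 a_2^2\cdots a_g^2=1$. Under the assignment, the relator is sent to $a_2^2 a_3^2\cdots a_g^2 a_1^2$. Rewriting the defining relation as $a_1^{-2}=a_2^2 a_3^2\cdots a_g^2$, we obtain
\[ a_2^2 a_3^2\cdots a_g^2 a_1^2 \;=\; a_1^{-2}a_1^2 \;=\; 1 \]
in $\Gamma_g$. Hence the assignment extends to a genuine endomorphism $\lambda:\Gamma_g\to\Gamma_g$.

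Next, I would construct the inverse endomorphism $\mu$ by the opposite cyclic permutation: $\mu(a_1)=a_g$ and $\mu(a_i)=a_{i-1}$ for $2\leq i\leq g$. The analogous computation sends the relator to $a_g^2 a_1^2 a_2^2\cdots a_{g-1}^2$, and rewriting the defining relation as $a_g^{-2}=a_1^2 a_2^2\cdots a_{g-1}^2$ shows this equals $1$ in $\Gamma_g$, so $\mu$ is also a well-defined endomorphism. A direct check on the generators gives $\lambda\circ\mu=\mu\circ\lambda=\mathrm{id}_{\Gamma_g}$, so $\lambda$ is an automorphism.

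There is no real obstacle here: the argument only exploits the cyclic symmetry of the relator $a_1^2 a_2^2\cdots a_g^2$, which allows any cyclic rotation of the generators to be absorbed by a suitable rewriting of the relation. This is why the authors call the statement a trivial observation.
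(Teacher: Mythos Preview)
Your proof is correct: checking that the cyclic permutation sends the relator to a conjugate-like rewriting that collapses to $1$, and then exhibiting the inverse cyclic permutation, is exactly the routine verification the statement requires. The paper itself gives no proof, simply labeling the lemma ``a trivial observation,'' so your argument is precisely the kind of detail the authors chose to omit.
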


The next lemma is  a technical one.

\begin{lemma}\label{matrices}
Let $A$ and $L$ be $g\times g$ matrices which are given by 
\[ A=\left(\begin{array}{cccccc}
0 & 0 & \cdots & 0 & 0 & 1 \\
1 & 0 & \cdots & 0 & 0 & 0 \\
0 & 1 & \cdots & 0 & 0 & 0 \\
\vdots & \vdots & \ddots & \vdots &\vdots & \vdots \\
0 & 0 & \cdots & 1 & 0 & 0 \\
0 & 0 & \cdots & 0 & 1 & m
\end{array}\right) \mbox{ and }
L=\left(\begin{array}{cccccc}
0 & 0 & \cdots & 0 & 0 & -1 \\
1 & 0 & \cdots & 0 & 0 & -1 \\
0 & 1 & \cdots & 0 & 0 & -1 \\
\vdots & \vdots & \ddots & \vdots &\vdots & \vdots \\
0 & 0 & \cdots & 1 & 0 & -1\\
0 & 0 & \cdots & 0 & 1 & -1
\end{array}\right). \]
Then for $m$ big enough, the matrix $LA^{g-1}$ has one real eigenvalue of modulus $>1$ $($and multiplicity 1$)$, while all other eigenvalues are of modulus $<1$. 
\end{lemma}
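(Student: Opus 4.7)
The plan is to compute the characteristic polynomial $p_{m}(\lambda) = \det(\lambda I - L A^{g-1})$ in closed form and then to analyse its $g$ roots asymptotically as $m\to\infty$. The crucial observation is that the characteristic polynomial of $A$ is $\lambda^{g} - m\lambda^{g-1} - 1$, so Cayley--Hamilton gives $A^{g-1}(A - mI) = I$; in particular $A^{g-1} = (A-mI)^{-1}$ and $\det(A - mI) = (-1)^{g+1}$. Writing $\lambda I - LA^{g-1} = \bigl[\lambda (A - mI) - L\bigr](A-mI)^{-1}$ then yields
\[ p_{m}(\lambda) = (-1)^{g+1} \det\bigl(\lambda(A-mI) - L\bigr). \]
In the matrix $\lambda (A-mI) - L$ the first $g-1$ columns form a bidiagonal block with $-\lambda m$ on the diagonal and $\lambda - 1$ on the subdiagonal, while the last column is $(\lambda+1,\,1,\,1,\ldots,1)^{T}$. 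A cofactor expansion along this last column is transparent: striking row $i$ from the bidiagonal block splits the resulting $(g-1)\times(g-1)$ minor into two triangular blocks of sizes $(i-1)\times(i-1)$ and $(g-i)\times(g-i)$ with determinants $(-\lambda m)^{i-1}$ and $(\lambda-1)^{g-i}$. Collecting signs then gives the closed form
\[ p_{m}(\lambda) = (\lambda+1)(\lambda-1)^{g-1} + \sum_{k=1}^{g-1} (\lambda m)^{k}(\lambda-1)^{g-1-k}. \]

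To locate the small eigenvalues, I would substitute $\lambda = u/m$ and let $m \to \infty$. Using the identity $a^{g} - b^{g} = (a - b)\sum_{k=0}^{g-1} a^{k} b^{g-1-k}$ with $a = u$, $b = -1$, one obtains
\[ \lim_{m \to \infty} p_{m}(u/m) \;=\; \frac{u^{g} - (-1)^{g}}{u+1} \]
uniformly on compact subsets of $\C \setminus \{-1\}$. The right-hand side has exactly $g-1$ simple zeros, namely the $g$-th roots of $(-1)^{g}$ other than $-1$, all lying on the unit circle. By Hurwitz's theorem it follows that for all sufficiently large $m$ the polynomial $p_{m}$ possesses $g-1$ simple roots at points $u/m$ arbitrarily close to these; in particular $g-1$ of its eigenvalues have modulus of order $1/m$ and hence strictly less than $1$.

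Finally, for the remaining eigenvalue I would use the rescaling $\lambda = -y m^{g-1}$, motivated by the trace computation $\mathrm{tr}(LA^{g-1}) = g - 2 - m - m^{2} - \cdots - m^{g-1}$, which shows that the sum of eigenvalues is $-m^{g-1} + O(m^{g-2})$ while the $g-1$ small eigenvalues already contribute only $O(1/m)$. Extracting the dominant power $m^{g(g-1)}$ from each summand of $p_{m}(-y m^{g-1})$ yields
\[ \lim_{m\to\infty} m^{-g(g-1)}\, p_{m}(-y m^{g-1}) \;=\; (-y)^{g-1}(1 - y), \]
again uniformly on compact subsets of $\C$. Since $y = 1$ is a simple root of the right-hand side, Hurwitz's theorem again guarantees that for $m$ large $p_{m}$ has a unique simple zero inside a small disc about $-m^{g-1}$; reality of this zero follows from the fact that the complex zeros of the real polynomial $p_{m}$ occur in conjugate pairs, and this isolated zero has no partner. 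Its modulus is visibly of order $m^{g-1} > 1$. The hard part will be keeping straight all the $m$-exponents when extracting dominant terms from the second summation in $p_{m}$ (one has to check that the subdominant contributions really go to zero uniformly on compact sets of $y$), but once the explicit formula for $p_{m}$ is in hand these asymptotic comparisons are routine.
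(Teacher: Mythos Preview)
Your argument is correct and arrives at exactly the same closed formula
\[
p(\lambda)=(\lambda+1)(\lambda-1)^{g-1}+\sum_{k=1}^{g-1}(\lambda m)^{k}(\lambda-1)^{g-1-k}
\]
that the paper obtains. The two derivations of this formula are different, however. The paper computes $A^{2},A^{3},\ldots,A^{g-1}$ and then $LA^{g-1}$ explicitly, and afterwards simplifies $\det(xI-LA^{g-1})$ by right-multiplying with a unipotent upper-bidiagonal matrix $K$ (a sequence of column operations); the resulting determinant is then expanded along the first column. Your route via Cayley--Hamilton, writing $A^{g-1}=(A-mI)^{-1}$ and computing $\det(\lambda(A-mI)-L)$ directly by cofactor expansion along the last column, is shorter and avoids ever writing down $A^{g-1}$ or $LA^{g-1}$.

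The analyses of the roots also differ. The paper reads off from the formula that the coefficient $a_{g-1}$ is $m^{g-1}+O(m^{g-2})$ while every other non-leading coefficient $a_{n}$ has degree at most $n<g-1$ in $m$; for $m$ large this yields the Perron-type inequality $|a_{g-1}|>|a_{g-2}|+\cdots+|a_{1}|+2$, and an elementary lemma quoted from \cite{DG} then gives one real root outside and $g-1$ roots inside the unit circle. Your rescalings $\lambda=u/m$ and $\lambda=-y\,m^{g-1}$ followed by Hurwitz's theorem are a legitimate alternative and in fact give more: they locate the small roots near the $(g-1)$ nontrivial $g$-th roots of $(-1)^{g}$ scaled by $1/m$, and the large root near $-m^{g-1}$. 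The cost is the appeal to a complex-analytic tool, whereas the paper's coefficient criterion is self-contained and purely algebraic. (A minor remark: the limit $(u^{g}-(-1)^{g})/(u+1)$ is already a polynomial, so there is no need to excise $u=-1$; the convergence is uniform on all compact subsets of $\C$.)
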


\begin{proof}
To multiply a matrix on the left with $A$ is to put the bottom row on top, move rows 1 to $g-2$ one row down and replace the last row by row $g-1$  plus $m$ times the last row. Hence, we easily see 
that
\[ A^2 =  \left(\begin{array}{cccccc}
0 & 0 & \cdots & 0 & 1 & m \\
0 & 0 & \cdots & 0 & 0 & 1 \\
1 & 0 & \cdots & 0 & 0 & 0 \\
0 & 1 & \cdots & 0 & 0 & 0 \\
\vdots & \vdots & \ddots & \vdots &\vdots & \vdots \\
0 & 0 & \cdots & 1 & m & m^2 
\end{array}\right),\; 
A^3 =  \left(\begin{array}{cccccc}
0 & 0 & \cdots & 1 & m & m^2 \\
0 & 0 & \cdots & 0 & 1 & m \\
0 & 0 & \cdots & 0 & 0 & 1 \\
\vdots & \vdots & \ddots & \vdots &\vdots & \vdots \\
0 & 0 & \cdots & 0 & 0 & 0 \\
0 & 0 & \cdots & m & m^2 & m^3 
\end{array}\right),\;\ldots\]
and finally
\[ A^{g-1}=\left(\begin{array}{cccccccc}
0 &1 & m & m^2 & \cdots & m^{g-4} & m^{g-3} & m^{g-2}\\
0 & 0 & 1 & m & \cdots & m^{g-5} & m^{g-4} & m^{g-3}\\
0 & 0 & 0 & 1 & \cdots & m^{g-6} & m^{g-5} & m^{g-4}\\
\vdots & \vdots & \vdots & \vdots  & \ddots & \vdots & \vdots & \vdots \\
0 & 0 & 0 & 0 & \cdots & 1 & m & m^2 \\ 
0 & 0 & 0 & 0 & \cdots & 0 & 1 & m \\
0 & 0 & 0 & 0 & \cdots & 0 & 0 & 1\\
1 & m & m^2 & m^3 & \cdots & m^{g-3} & m^{g-2} & m^{g-1}
\end{array}\right) .\] 
So 
\[ LA^{g-1}=\left(\begin{array}{ccccccc}
-1 & -m & -m^2 & -m^3 & \cdots  & -m^{g-2} & -m^{g-1}\\
-1 & 1-m & m-m^2 & m^2-m^3 & \cdots & m^{g-3}- m^{g-2} & m^{g-2} - m^{g-1}\\
-1 & -m & 1 - m^2 & m - m^3 & \cdots & m^{g-4} - m^{g-2} & m^{g-3} - m^{g-1}\\
-1 & -m & -m^2 & 1 - m^3 & \cdots & m^{g-5} -m^{g-2} & m^{g-4}-m^{g-1}\\
\vdots & \vdots & \vdots & \vdots & \ddots & \vdots & \vdots\\
-1 & -m & -m^2 & -m^3 & \cdots & m- m^{g-2} & m^2 -m^{g-1}\\
-1 & -m & -m^2 & -m^3 & \cdots & 1-m^{g-2} & m - m^{g-1}\\
-1 & -m & -m^2 & -m^3 & \cdots & -m^{g-2} & 1 - m^{g-1}\\
\end{array}\right) .\] 
The eigenvalues of this matrix are the roots of the characteristic polynomial
\[ p(x)= \det( x I_g - LA^{g-1}).\]
To be able to compute this characteristic polynomial, we consider the $g\times g$-- matrix 
\[ K=\left( \begin{array}{ccccccc}
1 & -m & 0 & 0  & \cdots & 0 & 0 \\
0 & 1 & -m & 0 & \cdots & 0 & 0 \\
0 & 0 & 1 & -m & \cdots & 0 & 0 \\
0 & 0 & 0 & 1 & \cdots & 0 & 0 \\
\vdots & \vdots & \vdots & \vdots & \ddots & \vdots & \vdots \\
0 & 0 & 0& 0 & \cdots & 1 & -m\\
0 & 0 & 0 & 0 & \cdots & 0 & 1
\end{array}\right)\]
of determinant 1. So we also have that 
\[ p(x) = \det( (x I_g - LA^{g-1}) K) .\]
The effect of multiplying a $g\times g$--matrix $B$ with $K$ is to replace the $i$-th column of $B$ ($i\geq 2$) by 
the $i$-th column minus $m$ times the $(i-1)$--th column. As a result, we find that 
\[ p(x)=\det \left( 
\begin{array}{ccccccc}
1+x & -mx & 0 & 0 & \cdots & 0 & 0 \\
1 & -1+x & -mx & 0 & \cdots & 0 & 0 \\
1 & 0 & -1+x & -mx & \cdots & 0 & 0 \\
1 & 0 & 0 & -1 + x & \cdots & 0 & 0 \\
\vdots & \vdots & \vdots & \vdots & \ddots & \vdots & \vdots \\
1 & 0 & 0 & 0 & \cdots &  -1+x & -mx \\
1 & 0 & 0 & 0 & \cdots & 0 & -1 +x \\
\end{array}\right).\]
By developing this determinant with respect to the first column we obtain:
\[ p(x)= (1+x)(x-1)^{g-1} + \sum_{k=1}^{g-1} (mx)^k (x-1)^{g-1-k}. \]

If we denote by $a_n$, the coefficient of $p(x)$ corresponding to $x^n$, then we see that 
\begin{itemize}
\item $a_g=1$.
\item $a_{g-1}=m^{g-1} + $ some polynomial of degree $<g-1$ in $m$.
\item $a_n$ is a polynomial of degree $\leq n$ in $m$ (for all $n<g-1$). 
\end{itemize}
It follows that for $m$ large enough, we have that 
\[ |a_{g-1}| > |a_{g-2}| + \cdots + |a_2| + |a_1| + 2\]
from which we can conclude that $p(x)$ has one real root outside the unit disk and all 
other roots lie strictly inside the unit disk. (E.g.\ see Lemma 2.4 in \cite{DG}.)   
\end{proof}

We are now ready for:

\begin{proof}[Proof of Theorem~\ref{existmap}]
Let $T\cong \Z_2$ be the torsion subgroup of $\Gamma_{g+1}/\gamma_2(\Gamma_{g+1})\cong \Z^g \oplus \Z_2$ 
and let $(\Gamma_{g+1}/\gamma_2(\Gamma_{g+1}))/T$ be the torsion free abelianization of $\Gamma_{g+1}$.
It is not so difficult to see that the images of the generators $a_1$, $a_2$, \ldots, $a_g$ form a basis of this 
free abelian quotient. 

With respect to this basis, the matrix corresponding to the map $\lambda$ from Lemma~\ref{map lambda}
is exactly the matrix $L$ from Lemma~\ref{matrices}. On the other hand, the map $\varphi_k$ from 
Theorem~\ref{map varphi} has the matrix $A$ as its representation. 
Note that $\det(L)=\pm 1$ and $\det(A)=\pm 1$ have opposite signs (exact values depending on $g$ being even or odd), but we always have that $\det(LA^{g-1})=-1$. 
Hence by taking $k$ (and thus also $m$) large enough, we have that 
\[ \varphi = \lambda\circ \varphi_k^{g-1}\]
is the automorphism we are looking for.  
\end{proof}
\begin{corollary}\label{notR}
Let $g>1$ and $c < 2 g$, then $\Gamma_{g+1}/\gamma_{c+1}(\Gamma_{g+1})$ does not have property $R_\infty$.
\end{corollary}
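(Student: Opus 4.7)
The plan is to exhibit a single automorphism of $\Gamma_{g+1}/\gamma_{c+1}(\Gamma_{g+1})$ with finite Reidemeister number, using the automorphism $\varphi$ produced by Theorem~\ref{existmap}. Its induced action $\varphi_1$ on the torsion-free abelianization $\Z^g$ has determinant $-1$, a unique real eigenvalue $\lambda_1$ of modulus $>1$, and remaining eigenvalues $\lambda_2,\ldots,\lambda_g$ of modulus $<1$. By Lemma~\ref{eigenvaluecriterium}, it suffices to show that $1$ is not an eigenvalue of $\varphi_i$ for any $1\le i\le c$. The preceding theorem of this section gives us the free nilpotent subgroup $\langle e_1,\ldots,e_g\rangle$ of finite index and class $c$ on $g$ generators, so Lemma~\ref{eigenvalues} applies: every eigenvalue of $\varphi_i$ (for $i\ge 2$) is an $i$-fold product $\lambda_1^{n_1}\cdots\lambda_g^{n_g}$ with $n_1+\cdots+n_g=i$, while for $i=1$ the eigenvalues are the $\lambda_j$ themselves, none of modulus $1$.

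Assuming such a product equals $1$ for some $i$ with $2\le i\le c<2g$, I take absolute values and use $|\lambda_1\cdots\lambda_g|=1$ to rewrite the condition as
\[\prod_{j\ge 2}|\lambda_j|^{n_j-n_1}=1.\]
Provided $\log|\lambda_2|,\ldots,\log|\lambda_g|$ are $\Q$-linearly independent, this forces $n_j=n_1$ for all $j\ge 2$, whence $i=gn_1$ and the product equals $(\lambda_1\cdots\lambda_g)^{n_1}=(-1)^{n_1}$; in order to equal $1$ we need $n_1$ even, forcing $i\ge 2g$, a contradiction.

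The hard part will be arranging this $\Q$-linear independence. Only finitely many candidate integer relations $\sum_{j\ge 2}k_j\log|\lambda_j|=0$ with $|k_j|\le 2g$ are relevant, and viewing the eigenvalues as algebraic functions of the parameter $m$ from Theorem~\ref{existmap}, each such relation is either an identity in $m$ (which I would rule out by analysing the characteristic polynomial of $LA^{g-1}$ from Lemma~\ref{matrices}, showing that its splitting field admits no non-trivial multiplicative identity beyond the determinant one) or holds only for finitely many $m\in\Z$. Choosing $m$ large and outside the resulting finite exceptional set then yields a $\varphi$ with the required properties. A technical wrinkle is that complex-conjugate eigenvalue pairs automatically share their moduli, so the independence statement should be understood inside the $\Q$-vector space spanned by the $\log|\lambda_j|$ modulo these forced coincidences; once this is properly formulated, the contradiction above goes through and finishes the proof.
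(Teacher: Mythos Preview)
Your overall architecture is exactly the paper's: take the automorphism $\varphi$ from Theorem~\ref{existmap}, invoke the finite-index free nilpotent subgroup so that Lemma~\ref{eigenvalues} applies, and then use Lemma~\ref{eigenvaluecriterium}. The paper's proof is essentially one sentence: the conditions on $\lambda_1,\dots,\lambda_g$ (determinant $-1$, a single real eigenvalue of modulus $>1$, all others of modulus $<1$) guarantee that no $i$-fold product with $i<2g$ equals $1$; this is the same computation carried out in \cite{DG} for the free nilpotent case, and the paper treats it as established.

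Where your write-up departs from the paper is in the attempt to \emph{prove} that key implication via $\Q$-linear independence of $\log|\lambda_2|,\dots,\log|\lambda_g|$, and this is where there is a genuine gap. The ``technical wrinkle'' you flag is not a wrinkle but an obstruction: complex-conjugate eigenvalues have identical moduli, so the logs you want independent are forced to satisfy linear relations. Passing ``modulo these forced coincidences'' does not rescue the conclusion. For instance, if $\lambda_2=\overline{\lambda_3}$, your modulus equation only yields $n_2+n_3=2n_1$ (and $n_j=n_1$ for $j\ge 4$), not $n_2=n_3=n_1$; the product then equals $(-1)^{n_1}(\lambda_2/\overline{\lambda_2})^{\,n_2-n_1}$, and there is no a priori reason this cannot be $1$ with $n_1$ odd. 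Your proposed genericity-in-$m$ argument would have to control the \emph{arguments} of the small eigenvalues, not just their moduli, and you have not done this.

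In short: the framework is right and matches the paper, but your justification of ``no short product equals $1$'' takes a different route that, as written, does not close. The paper instead appeals directly to the eigenvalue configuration, relying on the argument already given in \cite{DG}.
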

\begin{proof}
Fix a $c<2 g$ and consider the automorphism $\varphi$ from Theorem~\ref{existmap}. 
The conditions on the eigenvalues $\lambda_1, \lambda_2, \ldots , \lambda_g$ 
of $\varphi_1$ (the induced map on the abelianization) imply that none of the products 
$\lambda_{j_1} \lambda_{j_2} \cdots \lambda_{j_i} $ with $i<2 g$ equals 1. 

By Lemmas \ref{eigenvaluecriterium} and 
\ref{eigenvalues} this implies that $R(\varphi)<\infty$.
\end{proof}

From Corollaries \ref{hasR}  and \ref{notR} we can write the main result of this section.

\begin{theorem}\label{nonorient}
Let $g\geq 2$. Then, for any $c\geq 2g$, we have that the $c$-step nilpotent quotient 
$\Gamma_{g+1} / \gamma_{c+1} (\Gamma_{g+1})$ has property $R_\infty$. On the other hand, for $c<2g$,
then $\Gamma_{g+1} / \gamma_{c+1} (\Gamma_{g+1})$  does not have property $R_\infty$.\\
I.e.\ the $R_\infty$-nilpotency degree of $\Gamma_{g+1}$ is $2g$.
\end{theorem}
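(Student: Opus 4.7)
The plan is to observe that Theorem~\ref{nonorient} is essentially a direct assembly of Corollaries~\ref{hasR} and~\ref{notR}, which cover the two halves of the statement. So the proof will simply be one short paragraph collecting them, but it is worth highlighting the conceptual flow behind each half, since that clarifies why the threshold is precisely $2g$.

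For the $c\geq 2g$ direction, I would invoke Corollary~\ref{hasR}. The key point is that by the previous theorem the subgroup $\langle e_1,\ldots,e_g\rangle \leq \Gamma_{g+1}/\gamma_{c+1}(\Gamma_{g+1})$ is free $c$-step nilpotent of rank $g$ and has finite index. Consequently the hypothesis of Lemma~\ref{eigenvalues} is satisfied, so for an arbitrary automorphism $\varphi$ the eigenvalues $\lambda_1,\dots,\lambda_g$ of $\varphi_1$ satisfy $\lambda_1\cdots\lambda_g=\pm 1$, and therefore $(\lambda_1\cdots\lambda_g)^2=1$ appears as an eigenvalue of $\varphi_{2g}$ (this is a genuinely nontrivial product since not all indices are equal, assuming $g\geq 2$). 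Lemma~\ref{eigenvaluecriterium} then forces $R(\varphi)=\infty$.

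For the $c<2g$ direction, I would invoke Corollary~\ref{notR}. The crucial ingredient is Theorem~\ref{existmap}, which produces an automorphism $\varphi$ whose induced map $\varphi_1$ on the torsion-free abelianization has determinant $-1$ with exactly one real eigenvalue $\mu$ of modulus $>1$ (multiplicity one) and all remaining eigenvalues of modulus $<1$. For any product $\lambda_{j_1}\cdots \lambda_{j_i}$ with $i<2g$ we cannot use $\mu$ more than $g$ times without also using eigenvalues of modulus $<1$ at least $g$ times (and vice versa), and a direct size estimate shows no such product can equal $1$; Lemma~\ref{eigenvalues} then rules out $1$ as an eigenvalue of any $\varphi_i$ with $i\leq c<2g$, whence $R(\varphi)<\infty$ by Lemma~\ref{eigenvaluecriterium}.

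The only part requiring any real argument is verifying that in the second case no product of fewer than $2g$ eigenvalues can equal $1$, but since $\det(\varphi_1)=-1$ forces the absolute value of the product of all $g$ eigenvalues to be $1$, and only one eigenvalue has modulus $>1$, any product using $\mu$ with multiplicity $a$ and small eigenvalues with total multiplicity $b=i-a$ satisfies $|\mu|^a\cdot(\text{something}<1)^b=1$ only when $(a,b)=(g,g)$, i.e. $i=2g$. This is essentially a repetition of the estimate already contained in the proof of Corollary~\ref{notR}, so no further work is needed. I would conclude by noting that combining both corollaries gives $R_\infty$-nilpotency degree exactly $2g$ as claimed.
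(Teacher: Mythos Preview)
Your proposal is correct and matches the paper's approach exactly: the paper's proof of Theorem~\ref{nonorient} is nothing more than the sentence ``From Corollaries~\ref{hasR} and~\ref{notR} we can write the main result of this section,'' and you have identified precisely this assembly. Your surrounding exposition of the two corollaries is accurate; the only caveat is that the informal size estimate in your final paragraph (that $|\mu|^a\cdot(\text{small})^b=1$ forces $(a,b)=(g,g)$) is not quite a self-contained argument as written, but since you correctly defer to the proof of Corollary~\ref{notR} for this point, no gap arises.
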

%
%

\end{document}